  \bibopenparen\printfield{year}\bibcloseparen
\space \printfield[doi]{doi}}%
  \bibopenparen\printfield{year}\bibcloseparen%
  \space\printfield{pages}%
  \bibopenparen\printfield{year}\bibcloseparen%
  \space\printfield{pages}%
  \bibopenparen\printfield{year}\bibcloseparen%
\space\printfield{url}%
  \bibopenparen\printfield{year}\bibcloseparen%
\lstdefinestyle{mystyle}{
    language=Python,
    basicstyle=\ttfamily,
    numbers=left,
    numberstyle=\small,
    stepnumber=1,
    numbersep=5pt,
    backgroundcolor=\color{white},
    showspaces=false,
    showstringspaces=false,
    showtabs=false,
    tabsize=4,
    captionpos=b,
    breaklines=true,
    breakatwhitespace=true,
    title=\lstname,
    escapeinside={\%*}{*)},
    morekeywords={*,...}
}
\newcolumntype{C}[1]{>{\centering\arraybackslash}p{#1}}
\renewcommand*{\geq}{\geqslant}
\renewcommand*{\leq}{\leqslant}
\newtheorem{theorem}{Theorem}[section]
\newtheorem{lemma}{Lemma}[section]
\newtheorem{remark}{Remark}[section]
\lstdefinestyle{mystyle}{
    breakatwhitespace=false,         
    breaklines=true,                 
    captionpos=b,                    
    keepspaces=true,                 
    numbers=left,                    
    numbersep=5pt,                  
    showspaces=false,                
    showstringspaces=false,
    showtabs=false,                  
    tabsize=2}
\begin{document}


\begin{titlepage}
\begin{flushleft}

\textbf{\LARGE{Decoding the spatial spread of cyanobacterial blooms in an epilimnion}}

\vspace{0.5cm}



\textbf{Authors:}

Jacob Serpico$^{1}$, Kyunghan Choi$^{1}$, B. A. Zambrano-Luna$^{1}$, Tianxu Wang$^{1}$, Hao Wang$^{1,*}$

\vspace{0.5cm}

\textbf{Affiliations:}

$^1$Interdisciplinary Lab for Mathematical Ecology and Epidemiology \& Department of Mathematical and Statistical Sciences, University of Alberta, Edmonton, Alberta T6G 2R3, Canada

$^*$Corresponding author. Email: \href{mailto:hao8@ualberta.ca}{hao8@ualberta.ca}.

\vspace{0.5cm}

\textbf{Author Contributions:} \newline
\textbf{Jacob Serpico:} Conceptualization, methodology, software, validation, formal analysis, data curation, writing, visualization, project administration. 
\textbf{Kyunghan Choi:} Methodology, software, validation, formal analysis, writing.
\textbf{B.A. Zambrano-Luna:} Conceptualization, methodology, software, data curation, writing, visualization.
\textbf{Tianxu Wang:} Methodology, validation, formal analysis, writing.
\textbf{Hao Wang:} Conceptualization, supervision, validation, writing, funding acquisition.

\vspace{0.5cm}

\textbf{Competing Interests:}
There are no competing interests.
\end{flushleft}

\singlespacing
\begin{abstract}
\noindent Cyanobacterial blooms (CBs) pose significant global challenges due to their harmful toxins and socio-economic impacts, with nutrient availability playing a key role in their growth, as described by ecological stoichiometry (ES). However, real-world ecosystems exhibit spatial heterogeneity, limiting the applicability of simpler, spatially uniform models. To address this, we develop a spatially explicit partial differential equation model based on ES to study cyanobacteria in the epilimnion of freshwater systems. We establish the well-posedness of the model and perform a stability analysis, showing that it admits two linearly stable steady states, leading to either extinction or saturation. We use the finite elements method to numerically solve our system on a real lake domain derived from Geographic Information System (GIS) data and realistic wind conditions extrapolated from ERA5-Land. Our numerical results highlight the importance of lake shape and size in CB monitoring, while global sensitivity analysis using Sobol Indices identifies light attenuation and intensity as primary drivers of bloom variation, with water movement influencing early bloom stages and nutrient input becoming critical over time. This model supports continuous water-quality monitoring, informing agricultural, recreational, economic, and public health strategies for mitigating CBs.
\end{abstract}

\vspace{0.5cm}

\begin{flushleft}
    \textbf{Keywords:} Cyanobacteria, partial differential equations, ecological stoichiometry, stability analysis, finite elements method

    \vspace{0.5cm}
    
    \textbf{Acknowledgments:} This research project, especially J.S. and B.A.Z.L, was supported by a research grant (PI: H.W.) from Alberta Conservation Association. H.W.'s research was partially supported by an NSERC Individual Discovery Grant (RGPIN-2020-03911), an NSERC Discovery Accelerator Supplement Award (RGPAS-2020-00090), and a Senior Canada Research Chair. 
    \vspace{0.5cm}

    \textbf{Code Availability:} The code and data that support the findings of this study are available on GitHub with the identifier \href{https://github.com/B-A-Zambrano-Luna/Decoding-the-spatial-spread-of-cyanobacterial-blooms-in-an-epilimnion.git}{Modelling cyanobacteria population and nutrient stoichiometry}.
\end{flushleft}

\end{titlepage}

\singlespacing
\normalsize
\newpage

\section{Introduction}
Cyanobacteria, known for their distinct blue-green pigments, are a group of photosynthetic phytoplankton that play a central role in primary production and biogeochemical cycling in freshwater ecosystems (\cite{paerl2008blooms, wilhelm2011}). In recent decades, the global proliferation of cyanobacterial blooms (CBs) has raised serious concerns, as many bloom-forming species can produce toxins harmful to humans and wildlife (\cite{huisman2018cyanobacterial, kotak1996microcystin, Hardy2015}). CBs are associated with various economic and environmental problems, such as deterioration of water quality, restrictions on recreational activities, disruption of aquatic food webs, and being olfactorily unpleasant (\cite{paerl2009climate}). The severity and frequency of these blooms are influenced by changing climatic conditions such as rising temperatures, altered precipitation patterns, and anthropogenic eutrophication (the enrichment of water bodies with nutrients such as phosphorus and nitrogen) (\cite{paerl2008blooms, wilhelm2011}). 

Existing modelling efforts show that ecological stoichiometry provides a robust framework for understanding nutrient-driven population growth and species interactions (\cite{heggerud2020}). For cyanobacteria, stoichiometric constraints often center on intracellular phosphorus content and how this interacts with light, temperature, and other ecological parameters. Although several ordinary differential equation (ODE) models have shed light on stoichiometric constraints in plankton species (\cite{wang2007}), many freshwater ecosystems exhibit pronounced heterogeneity in space due to gradients in nutrient concentrations, light availability, and water movement. As a result, partial differential equations (PDEs) can more accurately capture the spatiotemporal patterns arising from organism movement and resource flow (\cite{Cantrell2004}). Specifically, incorporating diffusion, advection, and vertical or horizontal mixing processes can offer deeper insights into how blooms initiate and spread and how nutrient supply interacts with spatially varying environmental conditions (\cite{huisman1994, cao2006}). 

We propose and analyze a reaction-diffusion-advection model that incorporates ecological stoichiometry to study the interactions among cyanobacterial biomass, intracellular phosphorus, and dissolved mineral phosphorus. While previous studies \cite{hsu2010,hsu2014,Hsu2017} have focused on nutrient stoichiometry in a vertical domain, unstirred chemostat, our interest lies in understanding horizontal heterogeneity in large lakes, such as Pigeon Lake. By modeling these dynamics, we aim to better understand the movement of cyanobacteria, which can be observed through satellite imagery. To achieve this, we introduce a system of reaction-diffusion-advection equations that account for horizontal diffusion and advection in the lake’s epilimnion, assuming a constant diffusion rate in the horizontal domain.

Our work focuses on three major components. First, we rigorously formulate a mechanistic model and establish the well-posedness of the system in a bounded domain by proving the global existence, uniqueness, and boundedness of solutions under biologically relevant assumptions. Second, we investigate the long-term behaviour of the model through rigorous stability analysis of equilibrium states, identifying conditions under which cyanobacteria persist or die out. To facilitate the above analyses, we consider two equivalent systems, (\ref{model:main}) and (\ref{eq: BQP}), to handle singularities and blow-up problems. Finally, we perform extensive numerical simulations to explore spatial dynamics in one- and two-dimensional spatial domains. We employ finite-difference schemes to illustrate key wavefront and boundary-layer phenomena in the one-dimensional setting. For the two-dimensional case, we utilize the finite element method on a lake-shaped domain to improve realism, where spatial heterogeneity and irregular boundary geometry can strongly influence bloom formation and nutrient distribution. Both one- and two-dimensional simulations incorporate a function interpolated to real wind data from Pigeon Lake, Alberta, Canada, in 2023.

Given that parameter uncertainty is often significant in real freshwater systems, we perform a global sensitivity analysis to identify which physical parameters, such as epilimnion depth, water exchange rate, and background light attenuation, are most influential on cyanobacteria dynamics. Our results reveal how spatially heterogeneous resource distributions, wind-driven advection, physical lake conditions, and nutrient dynamics shape the persistence or collapse of a cyanobacteria population. We discuss how these findings have implications for water resource management, providing guidance on which parameter regimes may be most critical to monitor or control in the context of bloom prevention and mitigation efforts.

\section{Model formulation}
\label{sec:modelformulation}
Numerous models exist that investigate the qualitative dynamics of cyanobacteria in a spatially homogeneous environment. One such model given by \cite{heggerud2020} proposed the following stoichiometric equation for the growth of cyanobacteria,
\begin{align}
\label{model: ODE Heggerud}
    \begin{aligned}
        \frac{dB}{dt} &= r B\left(1-\frac{Q_m}{Q}\right)h(B) - lB - \frac{D}{z_m}B, \\
    \end{aligned}
\end{align} 
where $B$ and $Q$ describe the concentration of carbon biomass of cyanobacteria and intracellular phosphorus-to-carbon cell quota $(p/B)$, respectively. Both nutrients and light limit the logistic growth of cyanobacteria. The empirically validated Droop form models the former and the latter by the ubiquitous Monod form. The difference in mathematical expression stems from the two forms of energy being absorbed and processed differently (\cite{wang2022}). Adhering to the Lambert-Beer law (\cite{huisman1994}), with $r$ representing the growth rate, the light-dependent cyanobacterial growth function is 
\begin{align}
\label{eq: h}
    h(B) = \frac{1}{z_m(kB+K_{bg})}\ln\left(\frac{H+I_{in}}{H+I(z_m,B)}\right),
\end{align}
where
\begin{align}
\label{eq: I}
    I(s,B) = I_{\text{in}}\text{exp}[-(K_{\text{bg}}+kB)s]
\end{align}
describes the light intensity given at a water depth $s$. The parameters $I_{\text{in}}$, $K_{\text{bg}}$, and $k$ are described in Table \ref{tab:parameters}. Cyanobacteria uptake nutrients from the environment and lose nutrients through dilution due to growth. The nutrient uptake function is given by
\begin{align}
\label{eq: rho}
    \rho(Q,P) = \rho_m\left(\frac{Q_M-Q}{Q_M-Q_m}\right)\frac{P}{P+M},
\end{align}
where $\rho_m$, $M$, and $P$ are the maximum phosphorus uptake rate, the half-saturation coefficient for cyanobacteria phosphorus uptake, and the concentration of mineral phosphorus in the epilimnion, respectively. The parameter $l$ represents the loss rate of cyanobacteria due to respiration and other factors. The other parameters are given in Table \ref{tab:parameters}.
 
Existing modelling efforts incorporating ecological stoichiometry are commonly explored using ODE frameworks. However, empirical evidence suggests that environments' spatial scale and structure can significantly influence population interactions (\cite{Cantrell2004}). Thus, extending the existing ODE model to apply to a spatially heterogeneous environment is natural. In this study, we mechanistically formulate a reaction-diffusion-advection PDE model to investigate the dynamics of cyanobacteria in a spatially heterogeneous lake. To provide an accurate and versatile model that can be fitted to various data exports, we take a top-down view of a lake and consider it within the bounded domain $\Omega \subset \mathbb{R}^2$. We consider a lake with two layers: the epilimnion and the hypolimnion. The epilimnion, with an assumed depth of $z_m$, is where cyanobacteria predominantly reside. Below the epilimnion lies the hypolimnion, a deeper layer typically more stable and less affected by surface conditions. The stratification between these two layers impacts the transport and mixing of nutrients and the movement of cyanobacteria. Since cyanobacteria generally do not sink unless they are deceased (\cite{zhao2014, cao2006}), we assume their population exists idly (without external force) in the epilimnion layer.

Cyanobacteria and nutrient phosphorus transport are mainly affected by three types of movements. They follow random movement near the lake's surface caused by water diffusion, with diffusion rates $\alpha$ and $\beta$. Within the epilimnion layer, their directional movement is determined by water affected by wind (\cite{cao2006, zhang2021wind}), with a vector $\Vec{v} = (u(t),v(t)) \in \mathbb{R}^2$ describing the wind movement. We consider water exchange between the epilimnion and hypolimnion, with a water exchange rate of $D$. The nutrients and cyanobacteria will sink or become buoyant due to the force of this water exchange. Following \cite{heggerud2020}, the water exchange rate is inversely proportional to the depth of the epilimnion. On the other hand, since the hypolimnion in a lake is significant and stable enough, the phosphorus variation in the hypolimnion can be considered negligible, and we assume the phosphorus concentration is constant at $P_h$. Hence, $\frac{D}{z_m}\left(P_h - P\right)$ indicates the vertical exchange through the thermocline from the hypolimnion to the epilimnion. If $P_h > P$, there is an extra phosphorus input from the hypolimnion to the epilimnion; otherwise, the phosphorus concentration will be diluted. Since $Q$ is not a physical quantity but a ratio, it is more reasonable to track internal ($p$) and external ($P$) phosphorus as spatially heterogeneous quantities within the model.

\begin{table}[ht]
\centering
\caption{Parameters and their respective meanings and units used in the model.}
\resizebox{0.8\textwidth}{!}{%
\begin{tabular}{llr}
Variable & Meaning & Unit \\
\midrule
$\alpha$ & cyanobacterial diffusion coefficient & $\mathrm{m^2/\text{day}}$ \\
$\beta$ & mineral phosphorus diffusion coefficient & $\mathrm{m^2/\text{day}}$ \\
$\beta_B$ & cyanobacterial advection scalar & $\mathrm{unitless}$ \\
$\beta_P$ & mineral phosphorus advection scalar & $\mathrm{unitless}$ \\
$r$ & maximum cyanobacterial production rate & $1/\text{day}$ \\
$Q_m$ & cyanobacterial cell quota at which growth ceases & $\mathrm{mgP/mgC}$ \\
$Q_M$ & cyanobacterial cell quota at which nutrient uptake ceases & $\mathrm{mgP/mgC}$ \\
$z_m$ & depth of the lake epilimnion & $\mathrm{m}$ \\
$k$ & cyanobacterial-specific light attenuation & $\mathrm{m^2/mgC}$ \\
$K_{bg}$ & background light attenuation & $1/\mathrm{m}$ \\
$H$ & Half-saturation coefficient of light-dependent cyanobacterial production & $\mathrm{\mu mol/(m^2 \cdot \text{day})}$ \\
$I_{in}$ & light intensity at the surface of the water & $\mathrm{\mu mol/(m^2 \cdot \text{day})}$ \\
$l$ & loss of cyanobacterial due to respiration  & $1/\text{day}$ \\
$D$ & water exchange between stratified lake layers & $\mathrm{m/\text{day}}$ \\
$\rho_m$ & Maximum cyanobacterial phosphorus uptake rate & $\mathrm{mgP/mgC/day}$ \\
$P_h$ & dissolved mineral phosphorus concentration in hypolimnion & $\mathrm{mgP/m^2}$\\
$M$ & Half-saturation coefficient for cyanobacterial nutrient uptake & $\mathrm{mgP/m^2}$ \\
\bottomrule
\end{tabular}%
}
\label{tab:parameters}
\end{table}

Based on the above discussion, the full model on $\Omega \times (0,\infty)$ is then given by
\begin{align}\tag{$\mathcal{S}_p$}
    \left\{
    \begin{aligned}
        \frac{\partial B(x,t)}{\partial t} &= \underbrace{\alpha \Delta B}_{\text{diffusion}} - \underbrace{\beta_B \Vec{v}(t) \nabla B}_{\text{advection}} + \underbrace{r \left(1-Q_m\frac{ B}{p}\right)h(B) B}_{\text{limited growth term}} - \underbrace{l B}_{\text{loss}} - \underbrace{\frac{D}{z_m} B}_{\text{vertical exchange}}, \\
        \frac{\partial p(x,t)}{\partial t} &= \underbrace{\alpha \Delta p}_{\text{diffusion}}- \underbrace{\beta_B \Vec{v}(t) \nabla p}_{\text{advection}}+\underbrace{\eta(B,p,P)}_{\text{uptake by bacteria}} - \underbrace{l p}_{\text{phosphorus loss}} - \underbrace{\frac{D}{z_m}p}_{\text{vertical exchange}}, \\
        \frac{\partial P(x,t)}{\partial t} &= \underbrace{\beta \Delta P}_{\text{diffusion}} - \underbrace{\beta_P \Vec{v}(t) \nabla P}_{\text{advection}} + \underbrace{\frac{D}{z_m}(P_h - P)}_{\text{vertical exchange}} - \underbrace{\eta(B,p,P)}_{\text{uptake by bacteria}} + \underbrace{l p}_{\text{phosphorus recycling}}, 
        \hspace{10pt}
    \end{aligned}
    \label{model:main}
    \right.
\end{align}
with boundary and initial conditions
\begin{equation}
    \label{eq: bc ic}
    \begin{aligned}
        &\partial_\nu B(x,t) = \partial_\nu p(x,t) = \partial_\nu P(x,t) = 0, \qquad (x,t) \in \partial\Omega \times (0, \infty), \\
        &(B(x,0), p(x,0), P(x,0)) = (B_0(x), p_0(x), P_0(x)), \qquad x \in \Omega.
    \end{aligned}
\end{equation}
Here, $B$ ($\mathrm{mgC/m^2}$) represents the concentration of cyanobacterial biomass, and 
\begin{equation*}
    \eta(B,p,P) = \rho_m \left(\frac{Q_MB-p}{Q_M-Q_m} \right)\frac{P}{P+M}.
\end{equation*}
Using this function, we remove the singularity of the original function $\rho(B,p,P)$.
The remaining parameters are provided in Table \ref{tab:parameters}.


\section{Dynamical Analysis}
\label{sec:analysis}

\subsection{Assumptions}

\begin{itemize}
    \item[($A_I$)] The initial conditions $B_0$, $p_0$, $P_0$ satisfy 
    \begin{equation*}
         (B_0,p_0, P_0) \in [W^{2,\infty}(\Omega)]^3,\text{ }   B_0, p_0, P_0 > 0, \text{ and } Q_m\le \frac{p_0}{B_0}\le Q_M.
    \end{equation*}

    \item[($A_C$)] All parameters in the model are positive constants.

\end{itemize}

\subsection{Well-posedness}

For biological rationality, it is necessary to show the well-posedness of the solutions to the model. Therefore, we devote this section to studying the existence and uniqueness of model (\ref{model:main}). We introduce the following lemma to resolve the challenge from the unboundedness of the reaction term in the first equation.

\begin{lemma}\label{lem:Qbound}
Assume that $(A_I)$, $(A_C)$, and $(B,p,P)\in C^{2, 1} \big(\overline{\Omega} \times (0, T_{\max}); \mathbb R_{+}^3 \big) \cap C \big( \overline{\Omega} \times [0, T_{\max}); \mathbb R_{+}^3 \big)$ is a solution of the system \eqref{model:main} -- \eqref{eq: bc ic}. Then it satisfies
 \begin{equation}\label{ineq:quota}
 Q_m\le \frac{p}{B}\le Q_M \,\text{ in 
 }\,\overline{\Omega}\times [0,T], \; T< T_{\max}. 
 \end{equation}
\end{lemma}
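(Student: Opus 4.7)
The plan is to work with the pointwise cell quota $Q(x,t) := p(x,t)/B(x,t)$, derive a scalar parabolic equation that $Q$ satisfies on $\overline{\Omega} \times [0, T_{\max})$, and then apply the weak maximum principle to trap $Q$ in the interval $[Q_m, Q_M]$. The hypothesized $C^{2,1}$ regularity and strict positivity of $(B,p,P)$ ensure that $Q \in C^{2,1}$ and that $\min_{\overline{\Omega} \times [0,T]} B > 0$ for every $T < T_{\max}$, so all coefficients produced below are bounded continuous functions on the sub-cylinder.

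First I would compute $\partial_t Q$ by the quotient rule together with $\nabla p = B\nabla Q + Q \nabla B$ and the resulting identity $\Delta p - Q \Delta B = B \Delta Q + 2 \nabla B \cdot \nabla Q$. Substituting the $B$- and $p$-equations from (\ref{model:main}) and noting that $p = QB$ cancels the loss ($-l$) and vertical-exchange ($-D/z_m$) contributions, one obtains
\begin{equation*}
Q_t \;=\; \alpha \Delta Q + \frac{2\alpha}{B}\,\nabla B \cdot \nabla Q - \beta_B\, \vec{v}(t) \cdot \nabla Q + \rho_m\,\frac{Q_M - Q}{Q_M - Q_m}\,\frac{P}{P+M} - r\,h(B)\,(Q - Q_m).
\end{equation*}
The homogeneous Neumann condition $\partial_\nu Q = 0$ on $\partial\Omega$ follows from $\partial_\nu B = \partial_\nu p = 0$, and $Q_m \leq Q(\cdot,0) \leq Q_M$ by $(A_I)$.

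Next I would apply the scalar weak maximum principle to this equation on $\overline{\Omega} \times [0, T]$. The reaction term has exactly the right sign structure at the endpoints of the target interval: at $Q = Q_m$ it reduces to $\rho_m\,\frac{P}{P+M} \geq 0$, and at $Q = Q_M$ it reduces to $-r\,h(B)(Q_M - Q_m) \leq 0$. A standard argument multiplying the equation by $(Q - Q_M)_+$ (resp.\ $(Q_m - Q)_+$), integrating over $\Omega$, absorbing the first-order drifts via an exponential shift $e^{-\lambda t}$ with $\lambda$ sufficiently large, and invoking Gronwall's inequality then forces both truncations to remain identically zero. Hence $Q_m \leq Q \leq Q_M$ on $\overline{\Omega} \times [0,T]$, which is precisely (\ref{ineq:quota}).

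The main obstacle is ensuring that the coefficient $2\alpha\,\nabla B/B$ is a bounded continuous function so that the maximum principle machinery applies; this reduces to having $B$ bounded away from zero on $\overline{\Omega} \times [0,T]$, which follows from its assumed continuity on that compact set together with strict positivity. An alternative route that avoids dividing by $B$ altogether is to apply the comparison principle to the weakly coupled pair $w_1 = p - Q_m B$ and $w_2 = Q_M B - p$, whose equations derived from (\ref{model:main}) form a cooperative parabolic system: the source $\eta$ in the $w_1$-equation is proportional to $w_2$, and the limited-growth contribution in the $w_2$-equation is proportional to $w_1$. The nonnegative cone $\{w_1 \geq 0,\, w_2 \geq 0\}$ is therefore invariant, yielding the same conclusion.
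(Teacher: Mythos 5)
Your proposal is correct, but your primary route is not the one the paper takes; amusingly, the ``alternative route'' you sketch at the end is essentially the paper's actual proof. The paper works directly with $y = p - Q_m B$ (and, symmetrically, $Q_M B - p$): it derives the scalar parabolic equation for $y$, observes that the uptake term $\eta = \tfrac{\rho_m}{Q_M-Q_m}\tfrac{P}{P+M}\,(Q_M B - p)$ rewrites as a nonnegative zero-order coefficient times $y$ plus the nonpositive forcing $-\rho_m\tfrac{P}{P+M}B$, so that $0$ is a subsolution, and concludes $y\ge 0$ by scalar comparison; the upper bound is handled the same way (sequentially, using $w_1\ge 0$ to sign the source in the $w_2$-equation, rather than as a simultaneous cooperative system as you phrase it, though both framings are valid). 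Your main argument via the quota equation for $Q=p/B$ is also sound --- the reaction term does have the right signs at $Q=Q_m$ and $Q=Q_M$, the Neumann condition transfers, and the truncation/Gronwall argument closes --- and it has the conceptual advantage of making the invariant interval $[Q_m,Q_M]$ visible as a genuinely invariant region of a single scalar equation (the paper itself derives exactly this $Q$-equation later when introducing the equivalent system \eqref{eq: BQP}). Its cost is the drift coefficient $2\alpha\nabla B/B$: boundedness of $1/B$ on $\overline{\Omega}\times[0,T]$ follows from positivity and compactness as you say, but boundedness of $\nabla B$ up to $t=0$ is not literally contained in the stated regularity class $C^{2,1}(\overline{\Omega}\times(0,T_{\max}))\cap C(\overline{\Omega}\times[0,T_{\max}))$, so you would either need to invoke parabolic regularity with the $W^{2,\infty}$ initial data from $(A_I)$ or run the comparison on $[\delta,T]$ and let $\delta\to 0$. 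The paper's linear-combination approach avoids this issue entirely, which is presumably why it was chosen.
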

\begin{proof}
We first prove the $Q_m\le \frac{p}{B}$.
Let $y = p-Q_mB$ and $y(\cdot,0)= p_0-Q_mB_0\ge 0$ by $(A_I)$. Then it follows that
\begin{equation*}
\begin{aligned}
&\quad \ \partial_t y -\alpha\Delta y + \beta_B \Vec{v}(t) \nabla y  +ly + \frac{D}{z_m} y - \rho(p/B,P)B +  \frac{Q_m r}{p} h(B) B y = 0.
\end{aligned}
\end{equation*} 
Define an operator $\mathcal{L}_1: C^{2,1}\left(\bar{\Omega}\times(0,T_{\max});\mathbb{R}_+^3\right)\cap C\left(\bar{\Omega}\times[0,T_{\max});\mathbb{R}_+^3\right)\to C\left(\bar{\Omega}\times[0,T_{\max});\mathbb{R}_+^3\right)$,
\begin{equation*}
\begin{aligned}
\mathcal{L}_1(y)&:=\ \partial_t y -\alpha\Delta y + \beta_B \Vec{v}(t) \nabla y  +ly + \frac{D}{z_m} y - \rho(p/B,P)B +  \frac{Q_m r}{p} h(B) B y\\
&=\partial_t y -\alpha\Delta y + \beta_B \Vec{v}(t) \nabla y  +ly + \frac{D}{z_m} y + \frac{\rho_m }{Q_M-Q_m}\frac{P}{P+M} \left(y - (Q_M- Q_m) B\right) \\
& \quad + \frac{Q_m r}{p} h(B) B y .
\end{aligned}
\end{equation*}
It's easy to check that, 
\begin{equation*}
\mathcal{L}_1(0) = -\rho_m \frac{P}{P+M}  B \leq 0,
\end{equation*} for each $(x,t)\in \overline{\Omega}\times [0,T_{\max})$. The standard comparison principle yields $y\ge 0$ in $\overline{\Omega}\times [0,T_{\max})$. Likewise, we can use a similar argument to prove the second inequality of \eqref{ineq:quota}.
\end{proof}

\begin{remark}
    Lemma \ref{lem:Qbound} also holds for either $B$ and $p$ approaching 0.
\end{remark}

\begin{lemma}[Local existence]
    \label{lem: local existence}
    The system \eqref{model:main} -- \eqref{eq: bc ic} has a unique maximal solution $U = (B, p, P)$ satisfying
    \begin{equation}
        \label{eq: smoothness}
        U \in C^{2, 1} \big(\overline{\Omega} \times (0, T_{\max}); \mathbb R_{\geq0}^3 \big) \cap C \big( \overline{\Omega} \times [0, T_{\max}); \mathbb R_{+}^3 \big),
    \end{equation}
    where $T_{\max} \in (0, \infty]$. If $T_{\max} < \infty$, then
    \begin{equation}
        \label{eq: blow up condition}
        \lim_{t \to T_{\max}} \big( ||B (\cdot, t)||_{L^\infty} + ||p (\cdot, t)||_{L^\infty} + ||P (\cdot, t)||_{L^\infty} \big) = \infty.
    \end{equation} 
\end{lemma}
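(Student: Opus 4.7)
The plan is to view \eqref{model:main} as an abstract semilinear parabolic system driven by the analytic semigroup generated by the Neumann Laplacian, obtain short-time existence via a standard fixed-point argument, and then extend by maximal continuation. The principal obstacle is that the reaction term $r(1 - Q_m B/p) h(B) B$ in the first equation has an apparent singularity at $p = 0$; I would circumvent this by regularization and an a posteriori application of Lemma \ref{lem:Qbound}.

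First, I would replace $1/p$ in the first equation by $1/\phi_\varepsilon(p)$, where $\phi_\varepsilon$ is smooth, agrees with $p$ for $p \geq \varepsilon$, and is bounded below by $\varepsilon/2$ everywhere, with $\varepsilon \in (0, \tfrac{1}{2} Q_m \min_{\overline{\Omega}} B_0)$. The resulting regularized nonlinearity $\mathcal{F}_\varepsilon(t, U)$, which also absorbs the bounded, time-dependent advection $\vec{v}(t) \cdot \nabla$, is locally Lipschitz on bounded subsets of $X := [C(\overline{\Omega})]^3$, uniformly in $t$. Writing the regularized system as $\partial_t U = \mathcal{A} U + \mathcal{F}_\varepsilon(t, U)$ with $\mathcal{A} = \mathrm{diag}(\alpha \Delta_N, \alpha \Delta_N, \beta \Delta_N)$ the generator of an analytic $C_0$-semigroup on $X$ with homogeneous Neumann boundary conditions, standard semigroup theory (e.g. Amann's quasilinear framework or Henry's semilinear framework) yields a unique mild solution $U^\varepsilon \in C([0, T_\varepsilon]; X)$ for some $T_\varepsilon > 0$. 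Parabolic smoothing promotes $U^\varepsilon$ to a classical solution of the regularity asserted in \eqref{eq: smoothness}.

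Next, componentwise application of the parabolic maximum principle to the regularized system gives strict positivity of $B^\varepsilon, p^\varepsilon, P^\varepsilon$, and in particular a lower bound $B^\varepsilon(x, t) \geq c_0 e^{-Ct}$ for some $c_0 > 0$ depending only on $\min_{\overline{\Omega}} B_0$ and the constant coefficients. The argument of Lemma \ref{lem:Qbound} transfers verbatim to the regularized system (the factor $1/\phi_\varepsilon(p)$ appears only as a sign-neutral coefficient of $y$ in the operator $\mathcal{L}_1$), yielding $Q_m B^\varepsilon \leq p^\varepsilon \leq Q_M B^\varepsilon$; hence $p^\varepsilon \geq Q_m c_0 e^{-C T_\varepsilon} > \varepsilon$ on $[0, T_\varepsilon]$ after possibly shrinking $T_\varepsilon$, so the regularization is inactive and $U^\varepsilon$ in fact solves the original system \eqref{model:main}. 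Uniqueness, once one restricts to the regime $p \geq \varepsilon$ where the reaction is genuinely Lipschitz, follows from a Gronwall inequality applied to the difference of two solutions. Finally, defining $T_{\max}$ as the supremum of all $T > 0$ admitting such a classical solution and iterating the local existence step with $U(\cdot, t_n)$ as fresh initial data produces the blow-up alternative \eqref{eq: blow up condition}: if $\|B\|_{L^\infty} + \|p\|_{L^\infty} + \|P\|_{L^\infty}$ remained bounded up to $T_{\max}$, then the quota bound would keep the reaction nonsingular and the solution could be continued past $T_{\max}$, contradicting maximality.
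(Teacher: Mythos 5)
Your route (regularize $1/p$, run a semilinear fixed-point argument, then show a posteriori that the regularization is inactive) is genuinely different from the paper's, which instead applies Amann's quasilinear theory directly on the open cone $V=\{v\in W^{1,p}(\Omega;\mathbb{R}^3):v(x)\in(0,\infty)^3\}$, where $1/p$ is already smooth, and then devotes most of the proof to showing the solution cannot exit $V$ through its boundary. The comparison exposes the main gap in your argument: your blow-up alternative. You assert that if the $L^\infty$ norms stay bounded, ``the quota bound would keep the reaction nonsingular and the solution could be continued.'' But the quota bound $Q_m\le p/B\le Q_M$ alone does not make the nonlinearity locally Lipschitz near a restart datum $U(\cdot,t_n)$ at which $B$ and $p$ both approach zero: the map $(B,p)\mapsto B/p$ is not even continuous at the origin, and your iterated local-existence step degenerates (the admissible $\varepsilon_n$ and the existence times shrink to zero) unless $\min_{\overline{\Omega}}B(\cdot,t_n)$ is bounded below uniformly as $t_n\to T_{\max}$. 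This positive lower bound is exactly what the paper proves (via comparison with the linear auxiliary problem \eqref{eqn:aux}, Harnack's inequality, and Hopf's lemma). You actually have the ingredients — once the quota bound holds, the zeroth-order coefficient in the $B$-equation is bounded below by $-(l+D/z_m)$, so $B\ge(\min_{\overline{\Omega}}B_0)e^{-(l+D/z_m)t}$ and hence $p\ge Q_m B$ is bounded below on any finite interval — but you must state and use this at the continuation step; as written, the alternative \eqref{eq: blow up condition} is not established.

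Two further points need repair. First, your phase space $X=[C(\overline{\Omega})]^3$ cannot carry the advection term: $\vec v(t)\cdot\nabla B$ is not controlled by, let alone locally Lipschitz in, the sup norm, so the fixed-point argument must be run in a space controlling first derivatives (e.g.\ $W^{1,p}$ with $p>n$ as in the paper, $C^1(\overline{\Omega})$, or a fractional power space $X^\gamma$ with $\gamma>1/2$ in Henry's framework). Second, the claim that Lemma~\ref{lem:Qbound} ``transfers verbatim'' to the regularized system is not quite right: the cancellation that produces the term $\frac{Q_m r}{p}h(B)By$ rests on the identity $1-Q_mB/p=(p-Q_mB)/p=y/p$, which fails when $p$ is replaced by $\phi_\varepsilon(p)$ in the region where the regularization is active. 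The fix is an open–closed (continuity) argument: since $p_0\ge Q_m\min_{\overline{\Omega}}B_0>2\varepsilon$, the set of times on which $p^\varepsilon>\varepsilon$ everywhere is open and nonempty, the regularized and original systems coincide there so the unmodified Lemma~\ref{lem:Qbound} applies, and the resulting lower bound $p^\varepsilon\ge Q_m c_0e^{-Ct}>\varepsilon$ (for $T_\varepsilon$ small) shows the set is also closed. With these three repairs your regularization strategy does yield the lemma, and it has the mild advantage of avoiding the boundary-of-the-cone bookkeeping in Amann's alternative; the paper's approach buys a cleaner statement of the continuation criterion at the cost of the Harnack/Hopf analysis you would in any case need.
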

\begin{proof}
We consider auxiliary system given by
\begin{equation*}
    \partial_t U =  \mathcal \nabla (\mathbb{A} \nabla U) + \Phi (U, \nabla U), \ (x,t) \in \Omega \times (0, \infty), 
\end{equation*}
where $U=(B,p,P)$, $\mathbb{A} = \text{diag}\{\alpha, \alpha, \beta\}$
and 
\begin{equation}
    \label{eq: vector Phi}
    \Phi (U,\nabla U) = 
    \begin{pmatrix}
        - \beta_B \Vec{v}(t) \nabla B + r\left(1-Q_m\frac{B}{p}\right)h(B) B - l B -  \frac{D}{z_m} B \\
    - \beta_B \Vec{v}(t) \nabla p+\eta\left(B,p,P\right) B - l p -  \frac{D}{z_m}p \\
    - \beta_P \Vec{v}(t) \nabla P +  \frac{D}{z_m}(P_h - P) + P_{in} - \eta\left(B,p,P\right) B + l p
    \end{pmatrix}.
\end{equation}
    Fix $p > n$, $\epsilon > 0$ and define
    \begin{equation*}
        V = \{ v \in W^{1, p} (\Omega; \mathbb{R}^3): v(x) \in G = (0, \infty)^3, \ \forall x \in \overline{\Omega} \}.
    \end{equation*}
    Since $\mathbb{A} = \text{diag}\{\alpha, \alpha, \beta\}$ and $\alpha$, $\beta$ are real positive numbers, then all its eigenvalues have positive real parts. Moreover, assumption ($A_I$) implies that the initial values belong to $V$. Therefore, the local existence of the solution $U$ is guaranteed by (\cite{amann1990}, p. 17), where $U$ satisfies
    \begin{equation*}
        U \in C \big( [0, T_{\max}), V \big) \cap C^{2, 1} \big( \overline{\Omega} \times (0, T_{\max}), \mathbb R^3 \big)
    \end{equation*}
    and $T_{\max} \in (0, \infty]$ is the lifespan defined by
    \begin{equation*}
        T_{\max} := \sup \{ T > 0: U(\cdot, t) \in V, \ \forall t \in [0, T] \}.
    \end{equation*}
    By the Sobolev Embedding Theorem and the fact that $U(\cdot, t) \in C(\overline{\Omega}; \mathbb{R}^3)$ for all $t \geq 0$, we obtain
    \begin{equation*}
        U \in C \big( [0, T_{\max}), C(\overline{\Omega}; \mathbb{R}^3) \big) = C \big( \overline{\Omega} \times [0, T_{\max}); \mathbb R^3 \big).
    \end{equation*}
    Since \eqref{model:main} is uniformly positive definite, together with assumption ($A_I$), the solution to equation \eqref{model:main} -- \eqref{eq: bc ic} satisfies (Lemma 3.1 \cite{wang2025existence})
    \begin{equation*}
        B(x,t), p(x,t), P(x,t) > 0, \ \forall (x,t) \in \overline{\Omega} \times (0, T_{\text{max}}).
    \end{equation*}
    Then, \eqref{eq: smoothness} holds. In order to exclude the possibility that $U$ approaches to $\partial V$ as $t\to T_{\max}$ if $T_{\max}<\infty$, we prove $U$ stays away from $\partial V$. 
    Assume $T_{\max}<\infty$.     
    Consider the auxiliary problem
    \begin{equation}\label{eqn:aux}
    \left\{\begin{aligned}
    \begin{alignedat}{2}
        &\partial_t \tilde B =\alpha\Delta \tilde B - \beta_B \vec{v}(t) \nabla \tilde B  -l\tilde B -  \frac{D}{z_m} \tilde B && \,\,\,\,\text{ in } \Omega\times(0,\tilde T_{\max})\\
    &\partial_\nu \tilde B = 0 && \,\,\,\,\text{ in } \partial\Omega\times    (0,\tilde T_{\max}) \\
    &\tilde B(x,0) = B_0(x) && \,\,\,\,\text{ on }\overline{\Omega}. 
    \end{alignedat}
    \end{aligned}\right.
    \end{equation} By \cite{amann1990} and $B_0>0$ in $\overline{\Omega}$, there exists a unique nontrivial solution $\tilde B\in C^{2, 1} \big(\overline{\Omega} \times (0, T_{\max}) \big) \cap C \big( \overline{\Omega} \times [0, T_{\max}) \big)$ of \eqref{eqn:aux}. One can easily show that $0\le \tilde B\le \|B_0\|_{L^\infty(\Omega)}$ in $\overline{\Omega}\times [0,\tilde T_{\max})$, which gives $\tilde T_{\max}=\infty$. 
    Let $y = \tilde B -B $. Then $y$
    satisfies 
    \begin{equation*}
    \left\{\begin{aligned}
    \begin{alignedat}{2}
        &\partial_t  y =\alpha\Delta  y - \beta_B \vec{v}(t) \nabla  y  -ly -  \frac{D}{z_m}y -r\left(1-Q_m\frac{B}{p}\right)h(B) B && \,\,\,\,\text{ in } \Omega\times(0,T_{\max})\\
    &\partial_\nu y = 0 && \,\,\,\,\text{ in } \partial\Omega\times    (0,T_{\max}) \\
    & y(x,0) = 0 && \,\,\,\,\text{ on }\overline{\Omega}. 
    \end{alignedat}
    \end{aligned}\right.
    \end{equation*} Due to Lemma \ref{lem:Qbound}, we have $\partial_t  y -\alpha\Delta  y + \beta_B \vec{v}(t) \nabla  y  +(l +  \frac{D}{z_m})y \le 0$ in $\Omega\times(0,T_{\max})$. The maximum principle shows that $y\le 0$ in $\overline{\Omega}\times[0,T_{\max})$, hence $\tilde B\le B$ in $\overline{\Omega}\times[0,T_{\max})$. Harnack's inequality for parabolic equations (see \cite{evans2010}) ensures that for each $0<t_1<t_2$ and $K\subset\subset\Omega$, there exists a positive constant $C$ such that 
    \begin{align*}
        \sup_{x\in K}\tilde B(x,t_1)\le C\inf_{x\in K}\tilde B(x,t_2).
    \end{align*} For each $x\in \Omega$, we can choose $K$ and $t<T_{\max}$ such that $ \sup_{x\in K}\tilde B(x,t)>0$.
    Then we have $ 0<\sup_{x\in K}\tilde B(x,t)<C\inf_{x\in K}\tilde B(x,T_{\max})$, which implies $\tilde B(x,T_{\max})>0$. If $x\in \partial \Omega$ and $\tilde B(x,t) =0$, Hopf's lemma yields a contradiction to the fact that $\partial_\nu \tilde B =0$ on $\partial\Omega$. Therefore, $\tilde B(x,t)>0$ over $\overline{\Omega}\times [0,T_{\max}]$. In conclusion, for each $x\in \overline{\Omega}$
    \begin{align*}
        \liminf\limits_{t\to T_{\max}}B(x,t)\ge \tilde B(x,T_{\max})>0.
    \end{align*}
    By using similar arguments, we can show $p$ and $P$ do not approach zero as $t\to T_{\max}$, so that the solution $U=(B,p,P)$ stays away from $\partial V$, which concludes that \eqref{eq: blow up condition} holds by \cite{amann1990}. 
\end{proof}

\subsection{\texorpdfstring{$L^\infty$}{L-infinity} boundedness}

\begin{lemma}
    \label{lem: B bounded}
    Assume that $(A_I)$ and $(A_C)$.
    Then there exists a constant $\bar B$ such that the solution to equation \eqref{model:main} -- \eqref{eq: bc ic} satisfies 
    \begin{equation}
        \label{eq: B bounded}
        B(x,t)\leq \max\{\|B_0\|_{L^\infty(\overline\Omega)},\bar{B}\}, \quad \text{ for all } (x,t) \in \overline{\Omega} \times [0, T_{\max}).
    \end{equation}
\end{lemma}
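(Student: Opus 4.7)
\textbf{Proof plan for Lemma \ref{lem: B bounded}.} The strategy is to bound the reaction term in the $B$-equation by a constant and then compare $B$ with the solution of an auxiliary scalar ODE via the parabolic maximum principle.

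First, I would establish a uniform bound on the light-growth nonlinearity $h(B)\,B$. Since $B/(kB + K_{bg}) \le 1/k$ and $I(z_m, B) \ge 0$, inspection of \eqref{eq: h} yields
$$h(B)\,B \;\le\; C_h \;:=\; \frac{1}{k z_m} \ln\!\left(\frac{H + I_{in}}{H}\right) \qquad \text{for all } B \ge 0.$$
Combined with Lemma \ref{lem:Qbound}, which implies $0 \le 1 - Q_m B/p \le 1$, the full reaction term is controlled by $r\bigl(1 - Q_m B/p\bigr) h(B) B \le r C_h$, and the $B$-equation reduces to the differential inequality
$$\partial_t B - \alpha \Delta B + \beta_B \vec{v}(t) \cdot \nabla B \;\le\; r C_h - \Bigl(l + \tfrac{D}{z_m}\Bigr) B \qquad \text{in } \Omega \times (0, T_{\max}),$$
with Neumann boundary conditions.

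Next, I would introduce the auxiliary scalar ODE $y'(t) = r C_h - (l + D/z_m)\, y(t)$ with $y(0) = \|B_0\|_{L^\infty(\Omega)}$. Its explicit exponential solution is monotone between $y(0)$ and the equilibrium $\bar B := r C_h / (l + D/z_m)$, so $y(t) \le \max\bigl\{\|B_0\|_{L^\infty(\Omega)},\, \bar B\bigr\}$ for every $t \ge 0$. Setting $w := B - y$, with $y$ viewed as a spatially constant function (whose gradient and Laplacian vanish), I would subtract the two inequalities to obtain
$$w_t - \alpha \Delta w + \beta_B \vec{v}(t) \cdot \nabla w + \Bigl(l + \tfrac{D}{z_m}\Bigr) w \le 0 \qquad \text{in } \Omega \times (0, T_{\max}),$$
together with $\partial_\nu w = 0$ on $\partial\Omega$ and $w(\cdot,0) \le 0$. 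The weak maximum principle for uniformly parabolic operators with smooth bounded drift and Neumann data would then yield $w \le 0$, hence $B(x,t) \le y(t) \le \max\{\|B_0\|_{L^\infty(\Omega)}, \bar B\}$, which is \eqref{eq: B bounded}.

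The main obstacle is the uniform estimate on $h(B)\,B$: both factors $B/(kB+K_{bg})$ and $\ln\bigl((H+I_{in})/(H+I(z_m,B))\bigr)$ must be controlled simultaneously, which works here because the former is bounded by $1/k$ and the latter is bounded by $\ln((H+I_{in})/H)$ due to the lower bound $H + I(z_m, B) \ge H$. Once that estimate is available, the rest is a standard comparison argument; the time-dependence of the advection field $\vec{v}(t)$ causes no difficulty since the uniformly elliptic diffusion and Neumann boundary condition make the parabolic maximum principle directly applicable.
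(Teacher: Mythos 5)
Your proposal is correct and follows essentially the same route as the paper: bound the light-limited production term using Lemma \ref{lem:Qbound} together with the estimate $\ln\bigl((H+I_{in})/(H+I(z_m,B))\bigr)\le \ln\bigl((H+I_{in})/H\bigr)$, then close the argument with a parabolic comparison principle under the Neumann condition. The only (immaterial) difference is the choice of supersolution: you bound $h(B)B$ by the constant $C_h$ and compare with a linear ODE, giving $\bar B = rC_h/(l+D/z_m)$, whereas the paper keeps the factor $1/(k B + K_{bg})$ and takes $\bar B$ to be the positive root of $f(B)=\Bigl(\tfrac{r}{z_m(kB+K_{bg})}\tfrac{Q_M-Q_m}{Q_M}\ln\tfrac{H+I_{in}}{H}-l-\tfrac{D}{z_m}\Bigr)B$, which yields a slightly sharper constant but the same conclusion.
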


\begin{proof}
From equation \eqref{eq: h}, \eqref{eq: I}, and Lemma \ref{lem:Qbound}, we obtain
\begin{equation*}
    h(B) \leq \frac{1}{z_m (k B + K_{bg})}\Big(\frac{Q_M-Q_m}{Q_M}\Big)\ln \left(\frac{H + I_{\text{in}}}{H}\right).
\end{equation*}
    It follows from the positivity of $B$ and $p$, the first equation of \eqref{model:main} imply that
    \begin{equation}
        \label{eq: proof of B bounded}
        \partial_t B - \alpha \Delta B + \beta_B \vec{v}(t) \nabla B \leq  f(B)\quad \text{ in } \Omega\times (0,T_{\max}),
    \end{equation}  where 
    \[f(B)=\left( \frac{r}{z_m (k B + K_{bg})}\Big(\frac{Q_M-Q_m}{Q_M}\Big)\ln \left(\frac{H + I_{\text{in}}}{H}\right) - l -  \frac{D}{z_m}\right)B.\] 
    Let us denote by a nonzero constant $\bar B $ such that $f\big(\bar{B}\big)=0.$
     Then either $\bar B< 0$ or $\bar B>0$ for different parameters of $f$. If $\bar B< 0$, $f(B)<0$ for all $(x,t)\in \Omega\times (0,T_{\max})$. Then the maximum principle and $(A_I)$ show that $B\le \|B_0\|_{L^\infty}$. (only if $\partial \Omega \in C^2$.)
    If $\bar B >0$, notice that $f(B)\le 0$ for all $B\ge\bar B$.
    Let us define an operator $\mathcal{L}_2: C^{2,1}\left(\bar{\Omega} \times (0,T_{\max}) \right)\cap C\left(\bar{\Omega} \times [0,T_{\max})\right) \to C\left(\bar{\Omega} \times [0,T_{\max})\right)$, with
    \begin{equation*}
        \begin{aligned}
        \mathcal{L}_2(B) &= \partial_t B-\alpha \Delta B+\beta_B \vec{v}(t) \nabla B - r\left(1-\frac{Q_mB}{p}\right)h(B)B -lB-\frac{D}{z_m}B\\
        &\ge \partial_t B - \alpha \Delta B + \beta_B \vec{v}(t) \nabla B -  f(B).
        \end{aligned}
    \end{equation*} 
    Then $\mathcal{L}_2(C)\ge 0$ for  $C=\max\{\|B_0\|_{L^\infty(\overline{\Omega})},\bar B\}$. Since $B_0\le C$ in $\Omega $, the standard comparison principle concludes our lemma.

\end{proof}
\begin{remark}
    Notice that $\bar B$ depends on the parameters in reaction terms. Indeed, direct calculation gives
    \[ \bar B = \frac{1}{k}\left(\frac{r}{z_ml+D}\Big(\frac{Q_M-Q_m}{Q_M}\Big)\left(\frac{H + I_{\text{in}}}{H}\right)-K_{bg}\right)\]
    If $\bar B>0$ and $B_0$ is so small that $B_0< \bar B$, then the uniform bound $\bar B$ of $B$ depends on the parameters above. Biological speaking, cyanobacteria blooms depending on environmental, internal factors and its ability to absorb the light. Note that the  bound of cyanobacteria is proportional to the difference $Q_M-Q_m$.
\end{remark}

\begin{lemma}
    \label{lem: p bounded} Assume that $(A_I)$ and $(A_C)$.
    There exists a constant $\bar{p}>0$ such that the solution to equation \eqref{model:main} -- \eqref{eq: bc ic} satisfies 
    \begin{equation}
        \label{eq: p bounded}
        p(x,t)\le \bar p \quad \text{ for all }(x,t) \in \overline{\Omega} \times [0, T_{\max}).
    \end{equation}
\end{lemma}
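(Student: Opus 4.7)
The plan is to derive the bound for $p$ as an essentially immediate consequence of the two preceding lemmas. Lemma \ref{lem:Qbound} gives the pointwise cell-quota inequality $p(x,t) \le Q_M B(x,t)$ on $\overline{\Omega}\times[0,T_{\max})$, and Lemma \ref{lem: B bounded} provides the uniform bound $B(x,t)\le C$ with $C := \max\{\|B_0\|_{L^\infty(\overline\Omega)},\bar B\}$. Multiplying these two estimates yields $p(x,t)\le Q_M C$, so one may simply set $\bar p := Q_M C$ and conclude. There is no real obstacle; the work has already been absorbed into Lemmas \ref{lem:Qbound} and \ref{lem: B bounded}.

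If one wants an argument that mirrors the structure of Lemma \ref{lem: B bounded} and works directly on the $p$-equation (which would also give the explicit dependence of $\bar p$ on the reaction parameters), the approach is as follows. Using positivity of $p$ together with $P/(P+M)\le 1$ and the $B$-bound $B \le C$, first estimate the uptake term:
$$
\eta(B,p,P) \;=\; \rho_m\,\frac{Q_M B - p}{Q_M - Q_m}\cdot\frac{P}{P+M} \;\le\; \frac{\rho_m Q_M}{Q_M-Q_m}\,B \;\le\; \frac{\rho_m Q_M}{Q_M-Q_m}\,C.
$$
Substituting into the second equation of \eqref{model:main} produces the differential inequality
$$
\partial_t p - \alpha \Delta p + \beta_B \vec{v}(t)\nabla p + \Big(l+\tfrac{D}{z_m}\Big)\,p \;\le\; \frac{\rho_m Q_M C}{Q_M - Q_m} \quad \text{in } \Omega\times(0,T_{\max}),
$$
coupled to homogeneous Neumann boundary data. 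Define the operator $\mathcal{L}_3$ analogous to $\mathcal{L}_2$ in the proof of Lemma \ref{lem: B bounded}, take as a comparison constant the positive root $p^\ast := \frac{\rho_m Q_M C}{(Q_M-Q_m)(l + D/z_m)}$ of the right-hand reaction balance, and set $\bar p := \max\{\|p_0\|_{L^\infty(\overline\Omega)}, p^\ast\}$. Then $\bar p$ is a stationary super-solution of the above inequality, and the standard parabolic comparison principle gives $p\le \bar p$ on $\overline\Omega\times[0,T_{\max})$.

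The first route is by far the shortest and is what I would present; the second is useful only if an explicit, $B$-bound-free expression for $\bar p$ is desired. In either case no genuine difficulty arises beyond reusing the estimates already established, and the bound $\bar p$ is independent of $T_{\max}$, which is what will later be needed to rule out finite-time blow-up via \eqref{eq: blow up condition}.
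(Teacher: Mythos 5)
Your first route is exactly the paper's proof: the authors set $\bar p = Q_M\max\{\|B_0\|_{L^\infty},\bar B\}$ and invoke Lemmas \ref{lem:Qbound} and \ref{lem: B bounded} in a single line. The alternative comparison-principle argument you sketch is a reasonable bonus but is not needed, and your conclusion is correct.
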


\begin{proof}
Lemma \ref{lem: B bounded} and lemma \ref{lem:Qbound} conclude the corollary by letting $\bar p = Q_M\max\{\|B_0\|_{L^\infty}, \bar B\}$.
\end{proof}

\begin{lemma}
    \label{lem: P bounded} Assume that $(A_I)$, $(A_C)$, and $P_{\text{in}}$ is bounded.
    There exists a constant $\bar{P}>0$ such that the solution to equation \eqref{model:main} -- \eqref{eq: bc ic} satisfies 
    \begin{equation}
        \label{eq: P bounded}
        P(x,t)\leq \max\{\|P_0\|_{L^\infty(\Omega)},\bar{P}\}, \quad \text{ for all } (x,t) \in \overline{\Omega} \times [0, T_{\max}).
    \end{equation}
\end{lemma}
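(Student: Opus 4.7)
The plan is to follow the same strategy as the proof of Lemma \ref{lem: B bounded}, constructing a constant supersolution for $P$ and invoking the comparison principle. The key preparatory observation is a sign on the uptake term: by Lemma \ref{lem:Qbound} we have $p\le Q_M B$ pointwise, so
\[
\eta(B,p,P)=\rho_m\frac{Q_MB-p}{Q_M-Q_m}\frac{P}{P+M}\ge 0,
\]
which means the term $-\eta(B,p,P)$ in the $P$-equation is non-positive and can be safely discarded when searching for an upper bound. Combined with Lemma \ref{lem: p bounded}, which gives $p\le \bar p$, the third equation of \eqref{model:main} produces the differential inequality
\[
\partial_t P-\beta\Delta P+\beta_P\vec v(t)\nabla P\le \frac{D}{z_m}(P_h-P)+l\bar p+\|P_{\text{in}}\|_{L^\infty(\Omega)}
\qquad \text{in }\Omega\times(0,T_{\max}),
\]
where the $P_{\text{in}}$ contribution is included only if it appears as an exogenous source (and is assumed bounded by hypothesis).

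The right-hand side is an affine function of $P$ with strictly negative slope $-D/z_m$, so its unique zero is
\[
\bar P:=P_h+\frac{z_m\bigl(l\bar p+\|P_{\text{in}}\|_{L^\infty(\Omega)}\bigr)}{D}>0,
\]
and the RHS is non-positive whenever $P\ge \bar P$. Now define the operator
\[
\mathcal L_3(P):=\partial_t P-\beta\Delta P+\beta_P\vec v(t)\nabla P-\frac{D}{z_m}(P_h-P)+\eta(B,p,P)-lp-P_{\text{in}},
\]
so that $\mathcal L_3(P)=0$ on the solution. Set $C:=\max\{\|P_0\|_{L^\infty(\Omega)},\bar P\}$. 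Plugging the constant $P\equiv C$ into $\mathcal L_3$ kills the derivative terms, and using $C\ge \bar P$ together with $\eta\ge 0$ yields $\mathcal L_3(C)\ge 0$. Since $P_0\le C$ in $\overline\Omega$ and both $P$ and the constant $C$ satisfy homogeneous Neumann boundary conditions, the standard parabolic comparison principle gives $P(x,t)\le C$ on $\overline\Omega\times[0,T_{\max})$, which is exactly \eqref{eq: P bounded}.

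There is no real obstacle here: the proof is essentially mechanical once the sign of $\eta$ has been pinned down via the quota bound and once $p$ has been controlled by $\bar p$. The only point worth flagging is the role of the hypothesis that $P_{\text{in}}$ is bounded—it is needed solely so that the exogenous contribution can be absorbed into the finite threshold $\bar P$; without a uniform bound on $P_{\text{in}}$, the linear-in-$P$ structure of the reaction term would still be preserved but one could not choose a finite constant supersolution.
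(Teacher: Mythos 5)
Your proof is correct and follows essentially the same route as the paper's: build a constant supersolution $C=\max\{\|P_0\|_{L^\infty(\Omega)},\bar P\}$ for the operator $\mathcal{L}_3$ and conclude by the parabolic comparison principle, with $\bar p$ supplied by the earlier lemmas. The only difference is that you discard the uptake term outright using $\eta\ge 0$ (which yields an explicit, airtight $\bar P = P_h+\tfrac{z_m}{D}\bigl(l\bar p+\|P_{\text{in}}\|_{L^\infty}\bigr)$), whereas the paper retains it via the estimate $\rho(p/B,P)B\ge \tfrac{\rho_m\bar p}{Q_m}\tfrac{P}{P+M}$; your treatment is the safer of the two, since that displayed inequality is really an upper bound on the uptake term rather than a lower bound.
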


\begin{proof}
From equation \eqref{eq: rho}, along with the boundedness of \( B \), the second equation in \eqref{model:main} implies that
    \begin{equation}
        \label{eq: proof of P bounded}
        \begin{aligned}
        \partial_t P - \beta \Delta P + \beta_P \vec{v}(t) \nabla P & =   \frac{D}{z_m}(P_h - P) + P_\text{in} - \rho(p/B,P)B+ lp\\
        \end{aligned}
    \end{equation}
    Let us denote by an operator $\mathcal{L}_3: C^{2,1}\left(\bar{\Omega} \times (0,T_{\max}) \right)\cap C\left(\bar{\Omega} \times [0,T_{\max})\right) \to C\left(\bar{\Omega} \times [0,T_{\max})\right)$, with
    \begin{equation*}
    \begin{aligned}
        \mathcal{L}_3(P) &= \partial_t P - \beta \Delta P + \beta_P \vec{v}(t) \nabla P  -   \frac{D}{z_m}(P_h - P) - P_\text{in} + \rho(p/B,P)B- lp\\
        &\ge - \frac{D}{z_m}(P_h - P) - P_\text{in} + \frac{\rho_m\bar p}{Q_m}\frac{P}{P+M} - l\bar p=: g(P).
        \end{aligned}
    \end{equation*}
Let $\bar{P}$ be a positive zero of $g$.
    Note that $g(P)\ge 0$ for all $P\ge \bar P$. Since $\mathcal{L}_3(C)\ge 0$ for $C=\max\{\|P_0\|_{L^\infty(\overline{\Omega})},\bar P\}$ and $P_0\le C$,
    the comparison principle shows that \eqref{eq: P bounded} holds.
\end{proof}

Lemma \ref{lem:Qbound}, \ref{lem: B bounded}, \ref{lem: P bounded}, together with the criterion from Lemma \ref{lem: local existence}, we have the following Theorem.
\begin{theorem}[Global boundedness]
    \label{th: global existence}
    Let $\Omega\subset \mathbb{R}^n$ $(n\geq 1)$ be a bounded domain with a smooth boundary. Under Assumptions $(A_I)$ -- $(A_c)$, equation \eqref{model:main} -- \eqref{eq: bc ic} has a unique maximal solution $U = (B, p, P) \in C \big( \overline{\Omega} \times [0, \infty); \mathbb R_{\geq 0}^3 \big) \cap C^{2, 1} \big( \overline{\Omega} \times (0, \infty); \mathbb R_+^3 \big)$. Furthermore, there exists a constant $C>0$ independent of t such that 
    \begin{equation*}
        \|B(\cdot, t)\|_{L^{\infty}} + \|p(\cdot, t)\|_{L^{\infty}} + \|P(\cdot, t)\|_{L^\infty} \leq C, \qquad \forall t \in \left[0, \infty \right).
    \end{equation*}
\end{theorem}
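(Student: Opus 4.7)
The plan is to assemble the theorem as a straightforward consequence of the four preceding lemmas, since all of the technical work has already been done. The only remaining task is a contradiction argument that rules out finite-time blow-up.

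First, I would invoke Lemma~\ref{lem: local existence} to obtain the unique maximal solution $U=(B,p,P)$ on $\overline{\Omega}\times[0,T_{\max})$ with the regularity class stated in \eqref{eq: smoothness} and with lifespan $T_{\max}\in(0,\infty]$. This immediately delivers the qualitative part of the conclusion once $T_{\max}=\infty$ is established.

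Next, I would apply Lemmas~\ref{lem: B bounded}, \ref{lem: p bounded}, and \ref{lem: P bounded} on the interval $[0,T_{\max})$ to produce uniform, time-independent upper bounds
\begin{equation*}
\|B(\cdot,t)\|_{L^\infty}\leq M_B,\quad \|p(\cdot,t)\|_{L^\infty}\leq M_p,\quad \|P(\cdot,t)\|_{L^\infty}\leq M_P,
\end{equation*}
where $M_B=\max\{\|B_0\|_{L^\infty},\bar B\}$, $M_p=Q_M M_B$, and $M_P=\max\{\|P_0\|_{L^\infty},\bar P\}$. Crucially, none of these constants depends on $T_{\max}$; they are determined purely by the model parameters and the initial data. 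Setting $C:=M_B+M_p+M_P$ gives the desired uniform estimate on $[0,T_{\max})$.

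Finally, I would argue by contradiction: if $T_{\max}<\infty$, then the blow-up criterion \eqref{eq: blow up condition} from Lemma~\ref{lem: local existence} would force $\|B(\cdot,t)\|_{L^\infty}+\|p(\cdot,t)\|_{L^\infty}+\|P(\cdot,t)\|_{L^\infty}\to\infty$ as $t\to T_{\max}$, which directly contradicts the uniform bound $C$ just obtained. Hence $T_{\max}=\infty$, the regularity upgrades to $C(\overline{\Omega}\times[0,\infty);\mathbb{R}_{\geq0}^3)\cap C^{2,1}(\overline{\Omega}\times(0,\infty);\mathbb{R}_+^3)$, and the estimate $\|B(\cdot,t)\|_{L^\infty}+\|p(\cdot,t)\|_{L^\infty}+\|P(\cdot,t)\|_{L^\infty}\leq C$ holds for all $t\in[0,\infty)$. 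Since every ingredient is already in place, no genuine obstacle remains; the only point requiring care is verifying that the constants $\bar B$, $\bar p$, $\bar P$ extracted from Lemmas~\ref{lem: B bounded}--\ref{lem: P bounded} are truly independent of $T_{\max}$, which is immediate from their explicit formulas in terms of the model parameters.
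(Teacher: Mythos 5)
Your proposal is correct and follows essentially the same route as the paper, whose entire proof of this theorem is the one-line assembly of Lemmas~\ref{lem:Qbound}, \ref{lem: B bounded}, \ref{lem: p bounded}, and \ref{lem: P bounded} with the blow-up criterion \eqref{eq: blow up condition} from Lemma~\ref{lem: local existence}. Your write-up simply makes explicit the contradiction argument and the time-independence of the constants, which the paper leaves implicit.
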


\subsection{Linear stability analysis around equilibria with small movements}
In natural environments, wind plays a significant role in shaping the spatial distribution of cyanobacteria by influencing their transport through air convection. Given that cyanobacteria exhibit minimal random movement, it was initially hypothesized that spatial heterogeneity in their distribution could arise due to wind-driven transport. To explore this, we conducted a local stability analysis to assess whether small spatial perturbations around homogeneous steady states could grow over time, potentially leading to pattern formation. Our analysis considered a simplified scenario where the wind vector was assumed to be constant, denoted as $\vec{v}(t) = \mathbf{v} = (v_1, v_2) \in \mathbb{R}^2$. Additionally, we assumed that the horizontal transport of cyanobacteria and phosphorus through diffusion and advection was relatively minor compared to their vertical movement, leading to small diffusivity parameters ,  and a small wind-driven transport magnitude $|\mathbf{v}|$ . Under these assumptions, we employed first-order perturbation theory to approximate the eigenvalues of the linearized system and determine the stability of the homogeneous steady states.

Our local stability analysis revealed that weak diffusion and wind transport alone are insufficient to sustain long-term spatial heterogeneity. To verify this, we conducted numerical simulations incorporating real wind data. See Figure~\ref{fig:1D_Stability_no_ph} and \ref{fig:1D_Time_Series_Real_Wind}. The results showed that while spatial heterogeneity temporarily emerged during the early stages of cyanobacterial bloom, it did not persist over time. Instead, cyanobacteria eventually reached a saturated and spatially uniform state, even under realistic wind conditions. This suggests that the spatial heterogeneity observed in cyanobacterial blooms is primarily a transient phenomenon that occurs only in the initial stages of bloom formation. These findings highlight the importance of temporal dynamics in understanding bloom patterns. While wind-driven transport may influence the early development of spatial heterogeneity, additional mechanisms—such as biological interactions, nutrient availability, or external environmental disturbances—may be required to sustain long-term spatial pattern formation. This motivates further investigation into factors beyond wind-driven transport that could contribute to persistent spatial structuring in cyanobacterial blooms such as lake shape and size.

\subsubsection{Equivalent systems}
We first examine an equivalent PDE system that shares the same fundamental dynamics as our original model to analyze the stability of steady states without diffusion and wind advection. This equivalent system possesses a nonsingular structure, allowing us to investigate the corresponding ODE system's stability properties systematically. By leveraging this formulation, we establish a rigorous connection between the PDE and its reduced ODE model. This ensures that our stability analysis remains well-posed and interpretable within the broader dynamical framework.

Let $U=(B,p,P)$ be the solution of \eqref{model:main} -- \eqref{eq: bc ic} with the assumptions $(A_I)$ and $(A_C)$. We introduce a new equation for the variable $Q=\frac{p}{B}$ to obtain
\begin{align*}
    \frac{\partial Q}{\partial t} = \frac{\partial (p/B)}{\partial t} &= \alpha\left(\frac{\Delta p}{B}-\frac{p}{B^2}\Delta B\right) - \beta_B\mathbf{v}\left(\frac{\nabla p}{B}-\frac{p}{B^2}\nabla B\right)+\frac{\eta(B,p,P)}{B}-rQ\left(1-\frac{Q_m}{Q}\right)h(B) \\
    &= \alpha B^{-2}\nabla \cdot (B^2\nabla Q) - \beta_B\mathbf{v}\nabla Q +\rho(Q,P) - rQ\left(1-\frac{Q_m}{Q}\right)h(B) \\
    &= \alpha \Delta Q + \left(2\alpha\nabla \ln B-\beta_B\mathbf{v}\right)\cdot\nabla Q+\rho(Q,P) - rQ\left(1-\frac{Q_m}{Q}\right)h(B).
\end{align*} 
$Q$ represents the phosphorus ratio in cyanobacterial cells.
The advection term \( 2\alpha\nabla (\ln B) \cdot \nabla Q \) represents the movement of \( Q \) influenced by \( B \). This indicates that \( Q \) not only diffuses but also moves in the opposite direction of the gradient of \( \ln B \) as \( B \) evolves. If \( B \) is spatially constant, the advection term vanishes, implying that when \( B \) becomes spatially homogeneous, \( Q \) will eventually stabilize to a constant value. Otherwise, \( Q \) remains non-constant. Since $Q$ is a cell-level property, its diffusion should depend on the density of cyanobacteria ($B$) in the surrounding area. Specifically, from the diffusion-like term  $B^{-2} \nabla \cdot (B^2 \nabla Q) $, the balance between $B^2$ and $B^{-2}$ in the diffusion equation ensures that the spread of the cell quota $Q$ is stable across varying cyanobacteria densities ($B$). In regions of high $B$, $B^2$ increases the flux, but $B^{-2}$ dampens the overall effect, preventing overly rapid diffusion. Conversely, in regions of low $B$, $B^2$ reduces the flux, but $B^{-2}$ amplifies the diffusion, ensuring $Q$ spreads effectively even in sparse populations. This interplay creates a self-regulating mechanism that balances diffusion, reflecting the biological need for stable resource distribution across different densities. 

In addition to the direct derivation of the equation $Q$ from the equations of $B$ and $p$, we derive the same diffusion-like term for $Q$ when $B$ diffuses on the heterogeneous environment over discretized space and time domain in Appendix~\ref{ap: deriveQdiffusion}. We observe the same diffusive formulation balanced by the distribution $B$ spreading $Q$. With the equation of the new variable $Q$, \eqref{model:main} is transformed to a system
\begin{equation}\label{eq: BQP}\tag{$\mathcal{S}_Q$}
    \left\{
    \begin{aligned}
        \frac{\partial B(x,t)}{\partial t} &= \alpha \Delta B - \beta_B \Vec{v}(t) \nabla B + r \left(1-Q_m\frac{ B}{p}\right)h(B) B - l B - \frac{D}{z_m} B, \\
        \frac{\partial Q(x,t)}{\partial t} &= \alpha B^{-2}\nabla\cdot(B^2 \nabla Q)- \beta_B \Vec{v}(t) \nabla Q-r(Q-Q_m)h(B)+\rho(Q,P), \\
        \frac{\partial P(x,t)}{\partial t} &= \beta \Delta P - \beta_P \Vec{v}(t) \nabla P + \frac{D}{z_m}(P_h - P) - \rho(B,Q)B + lBQ, 
        \hspace{10pt}
    \end{aligned}
    \right.
\end{equation}
with boundary and initial conditions
\begin{equation*}
    \begin{aligned}
        &\partial_\nu B(x,t) = \partial_\nu Q(x,t) = \partial_\nu P(x,t) = 0, \qquad (x,t) \in \partial\Omega \times (0, \infty), \\
        &(B(x,0), Q(x,0), P(x,0)) = (B_0(x), Q_0(x), P_0(x)), \qquad x \in \Omega,
    \end{aligned}
\end{equation*} where we assume the positivity of initial data. In addition, $Q_0=p_0/B_0$ and we have used the fact that $\nabla Q=-\frac{p}{B^2}\nabla B + \frac{1}{B}\nabla p$ for the boundary condition.  The existence and uniqueness of the solution to \eqref{eq: BQP} is guaranteed by the result of the system \eqref{model:main}. Furthermore, Lemma~\ref{lem:Qbound} ensures that \( Q \) remains bounded within \( [Q_m, Q_M] \). Two systems \eqref{model:main} and \eqref{eq: BQP} share the equilibrium points. We will prioritize using the system that provides computational or analytical advantages. The specific context and objectives of the analysis will guide this choice. For example, we use \eqref{eq: BQP} in the case of $B\to0$ and $p\to 0$, but $Q\to \hat{Q}$ for some $\hat Q>0$. In addition, if we need to consider a linearized problem near homogeneous equilibrium, we use \eqref{model:main} because of the difficulty from the nonlinearity of $B^{-2}\nabla\cdot(B^2\nabla Q)$.

\subsubsection{Steady-states without movement}

In the case of the homogeneous steady-state, we investigate the steady-state problem of \eqref{eq: BQP}
\begin{align*}
    0 &=  U(B,Q,P) = rB\left(1 - \frac{Q_m}{Q}\right) h(B) - lB -  \frac{D}{z_m}B \\
    0 &= V(B,Q,P) = \rho(Q,P)-rQ\left(1-\frac{Q_m}{Q}\right)h(B) \\
    0 &= W(B,Q,P) = \frac{D}{z_m}(P_h - P) - \rho(Q,P)B + lBQ.
\end{align*}
This system has been studied extensively in works such as \cite{heggerud2020,wang2007} without the term $lBQ$ on the third equation. The extinction steady state is given by $E_0 = (0,\hat{Q},P_h)$ such that
\begin{align*}
   \hat{Q} &= \frac{\tilde\rho(P_h)Q_M+rQ_mh(0)}{\tilde\rho(P_h)+rh(0)}\\ 
   &= Q_m\frac{rh(0)}{\tilde\rho(P_h)+rh(0)} + Q_M\left(1-\frac{rh(0)}{\tilde\rho(P_h)+rh(0)}\right) \text{  with  } \tilde\rho(P) = \frac{\rho_m}{Q_M-Q_m}\frac{P}{P+M}.
\end{align*} 
Notice that $\hat Q$ is a interpolation between $Q_m $ and $Q_M$. By \cite{wang2007}, define
\begin{align*}
    R_0 = \frac{rh(0)(1-Q_m/\hat{Q})}{l+D/z_m}.
\end{align*}
By following the proof of \cite{wang2007}, one can easily check that if $R_0 <1$, the extinction equilibrium $E_0$ is globally asymptotically stable for the ODE system even with the term $lBQ$. For $R_0 > 1$, $E_0$ is unstable, and it is easily checked that there exists a positive equilibrium $E^*$ of the system with $lBQ$, and cyanobacteria uniformly persist. Note that when $P_h=0$, $\hat Q=Q_m$, the extinction equilibrium $E_0$ is globally asymptotically stable since $R_0=0$. Therefore, the positiveness of $P_h$ is necessary for cyanobacteria to persist uniformly.

\subsubsection{Steady-states with small diffusion and constant advection}

The positive equilibrium \( E^* := (B^*, Q^*, P^*) \) satisfies the system \eqref{eq: BQP}. By multiplying the second equation by \( B^2 \), we find that \( E_0 \) also satisfies a modified version of \eqref{eq: BQP}.  Define the equilibria \( U_0 = (0,0,P_h) \) and \( U^* = (B^*, p^*, Q^*) \), where \( p^* = Q^* B^* \), for the system \eqref{model:main}. However, if the term \( p/B \) appears in the evaluation of \( U_0 \), we use \( \hat{Q} \) instead. Specifically, we assume that if \( U = (B, p, P) \) approaches \( U_0 \) as \( t \to \infty \), then \( p/B \) converges to \( \hat{Q} \) in the same limit.

We linearize system \eqref{model:main} at $\bar U = (\bar B,\bar p,\bar P)$, where $\bar B,\bar p$, and $\bar P$ are nonnegative constants. Let $\tilde{U}=U - \bar U$, and dropping the tilde signs for notational simplicity. Inspired by the transformation described in \cite{Cantrell2004}, let
\begin{equation}
    B(x,t) = \sum_n\hat{B}(t) e^{inx}, \quad p(x,t) = \sum_n\hat{p}(t) e^{inx}, \quad P(x,t) = \sum_n\hat{P}(t) e^{inx},
\end{equation} where $n$ is a mode number that determines the spatial frequency of perturbations.

\begin{figure}[ht]
    \centering
    \begin{subfigure}[b]{\linewidth}
        \centering
        \includegraphics[width=\linewidth]{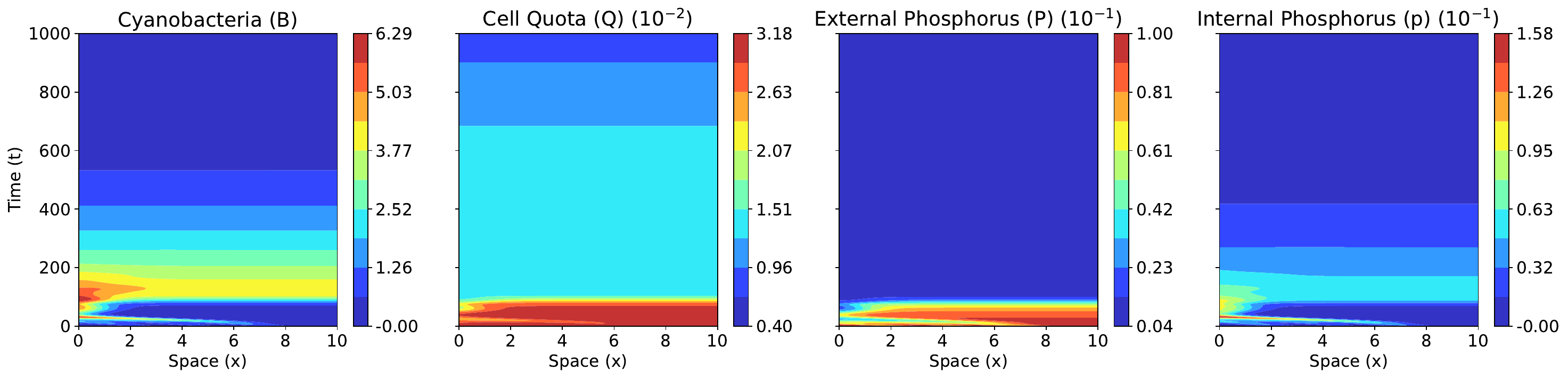}
    \end{subfigure}
    \vskip\baselineskip 
    \begin{subfigure}[b]{\linewidth}
        \centering
        \includegraphics[width=\linewidth]{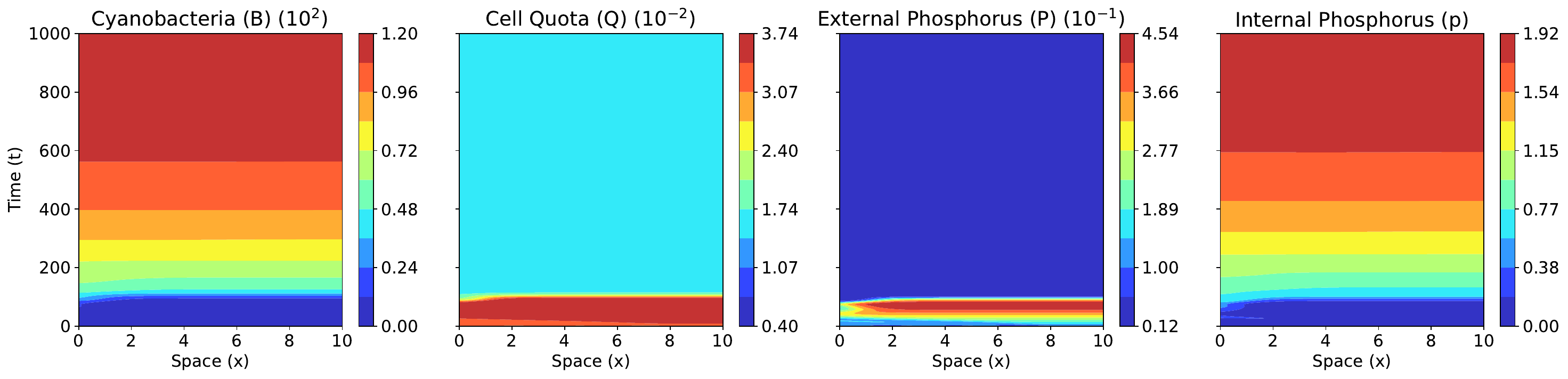}
    \end{subfigure}

    \caption{Time series simulations in one dimension of cyanobacteria, cell quota, and external and internal phosphorus over a 1000-day period. The top row represents simulations when $P_h = 0$, and the bottom when $P_h = 2$. We interpolated a function to a real wind vector from Pigeon Lake, Alberta, in 2023. Once cyanobacteria consume all initial phosphorus, the respective equilibria are stable for all time after.}
    \label{fig:1D_Stability_no_ph}
\end{figure}

\noindent We obtain the transformed system
\begin{equation}
\label{eq:transformed linear system}
    \begin{aligned}
        \hat{B}_t &= -\alpha n^2 \hat{B} - \beta_{B} \mathbf{v} i n \hat{B} + r\left(1 - \frac{2Q_m \bar B}{\bar p }\right) h(\bar B ) \hat{B}  + r\left(1 - \frac{Q_m\bar B}{\bar p}\right) h'(\bar B)\bar B \hat{B}  \\
        & \quad + r\frac{Q_m\bar B^2}{\bar p^2} h(\bar B) \hat{p} - l \hat{B}  -  \frac{D}{z_m} \hat{B}, \\
        \hat{p}_t &= -\alpha n^2 \hat{p}- \beta_B \mathbf{v} i n \hat{p}+ \rho_m \frac{Q_M}{Q_M - Q_m} \frac{\bar P}{\bar P + M} \hat{B} - \frac{\rho_m}{Q_M - Q_m} \frac{\bar P}{\bar P + M}\hat{p} + \rho_m \frac{Q_M \bar B - \bar p}{Q_M - Q_m} \frac{M}{(\bar P + M)^2} \hat{P} \\
        & \quad - l \hat{p} -  \frac{D}{z_m}\hat{p}, \\
        \hat{P}_t &= -\beta n^2 \hat{P} - \beta_P \mathbf{v} i n \hat{P} -  \frac{D}{z_m} \hat{P} - \rho_m \frac{Q_M}{Q_M - Q_m} \frac{\bar P}{\bar P + M} \hat{B} + \frac{\rho_m}{Q_M - Q_m} \frac{\bar P}{\bar P + M}\hat{p} \\
        & \quad - \rho_m \frac{Q_M \bar B - \bar p}{Q_M - Q_m} \frac{M}{(\bar P + M)^2} \hat{P} + l \hat{p}, 
    \end{aligned}
\end{equation} 
Further, the entries of the Jacobian matrix of the transformed equation are given by 
\begin{align*}
    a_{11} &= -\alpha n^2 - \beta_B \mathbf{v} i n - l - \dfrac{D}{z_m} + r\left(1 - \dfrac{2Q_m \bar B}{\bar p}\right) h(\bar B) + r\left(1 - \dfrac{Q_m \bar B}{\bar p}\right) h'(\bar B) \bar B, \\[2ex]
    a_{12} &= r\dfrac{Q_m \bar B^2}{\bar p^2} h(\bar B), \\[2ex]
    a_{21} &= \rho_m \dfrac{Q_M}{Q_M - Q_m} \dfrac{\bar P}{\bar P + M}, \\[2ex]
    a_{22} &= -\alpha n^2 - \beta_B \mathbf{v} i n - l - \dfrac{D}{z_m} - \dfrac{\rho_m}{Q_M - Q_m} \dfrac{\bar P}{\bar P + M}, \\[2ex]
    a_{23} &= \rho_m \dfrac{Q_M \bar B - \bar p}{Q_M - Q_m} \dfrac{M}{(\bar P + M)^2}, \\[2ex]
    a_{31} &= -\rho_m \dfrac{Q_M}{Q_M - Q_m} \dfrac{\bar P}{\bar P + M}=-a_{21}, \\[2ex]
    a_{32} &= \dfrac{\rho_m}{Q_M - Q_m} \dfrac{\bar P}{\bar P + M} + l, \\[2ex]
    a_{33} &= -\beta n^2 - \beta_P \mathbf{v} i n - \dfrac{D}{z_m} - \rho_m \dfrac{Q_M \bar B - \bar p}{Q_M - Q_m} \dfrac{M}{(\bar P + M)^2}.
\end{align*}
\noindent The Jacobian matrix $J$ is also of the form
\[
J = A +\Delta,
\]
where $ A $ is a matrix with $n=0$ and $\Delta=- n^2 \operatorname{diag}(\alpha, \alpha, \beta) - i n \operatorname{diag}(\beta_B, \beta_B, \beta_P)$ with small positive parameters \( \alpha, \beta, \beta_B\), and \( \beta_P \). The imaginary unit is denoted by \( i \), and \( n \) is a natural number. In this regard, we can think of $J$ as a perturbation of $A$ with a small matrix $\Delta$, say $\|\Delta\|\ll\|A\|$. Then the eigenvalues of $J$ can be approximately analyzed using the first-order perturbation theory. Assume that the perturbed eigenvalue $ \mu_i $ and eigenvector $ \mathbf{u}_i $ can be expressed as $ \mu_i = \lambda_i + \epsilon_i $ and $ \mathbf{u}_i = \mathbf{v}_i + \mathbf{w}_i $, where $ \epsilon_i $ and $ \mathbf{w}_i $ represent small first-order corrections. Substituting these into the eigenvalue equation $ J \mathbf{u}_i = \mu_i \mathbf{u}_i $ leads to  
\begin{align*}
(A + \Delta)(\mathbf{v}_i + \mathbf{w}_i) = (\lambda_i + \epsilon_i)(\mathbf{v}_i + \mathbf{w}_i).
\end{align*}
Expanding both sides gives  
\begin{align*}
A\mathbf{v}_i + A\mathbf{w}_i + \Delta\mathbf{v}_i + \Delta\mathbf{w}_i = \lambda_i\mathbf{v}_i + \lambda_i\mathbf{w}_i + \epsilon_i\mathbf{v}_i + \epsilon_i\mathbf{w}_i.
\end{align*}

\begin{figure}[ht]
     \includegraphics[width=0.48\linewidth]{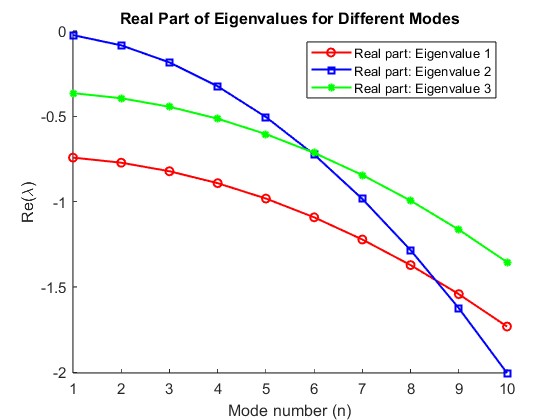}~\includegraphics[width=0.48\linewidth]{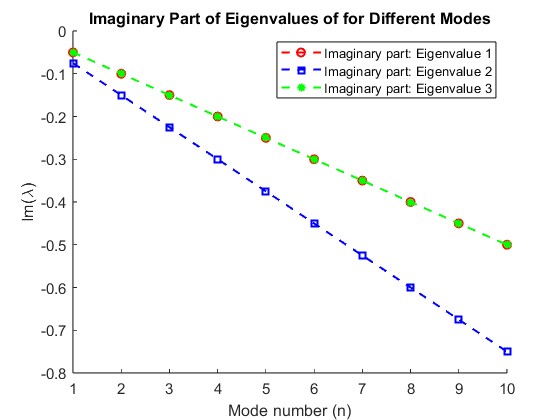}
     \caption{Real and imaginary part of eigenvalues for different modes for the extinction equilibrium $E_0$, respectively. Here we set $r = 0.7$ and $P_h = 0$.}
     \label{fig:extinction}
\end{figure}

\noindent Subtracting $ A\mathbf{v}_i = \lambda_i\mathbf{v}_i $ from both sides results in  
\begin{align*}
A\mathbf{w}_i + \Delta\mathbf{v}_i + \Delta\mathbf{w}_i = \lambda_i\mathbf{w}_i + \epsilon_i\mathbf{v}_i + \epsilon_i\mathbf{w}_i.
\end{align*}
Since $ \Delta\mathbf{w}_i $ and $ \epsilon_i\mathbf{w}_i $ are second-order small terms, they can be ignored, simplifying to  
\begin{align*}
A\mathbf{w}_i + \Delta\mathbf{v}_i \approx \lambda_i\mathbf{w}_i + \epsilon_i\mathbf{v}_i.
\end{align*}
Rearranging yields  
\begin{align*}
(A - \lambda_i I)\mathbf{w}_i \approx -\Delta\mathbf{v}_i + \epsilon_i\mathbf{v}_i.
\end{align*}
Taking the inner product with $ \mathbf{v}_i $ gives  
\begin{align*}
\mathbf{v}_i^* (A - \lambda_i I)\mathbf{w}_i \approx -\mathbf{v}_i^* \Delta\mathbf{v}_i + \epsilon_i \mathbf{v}_i^* \mathbf{v}_i.
\end{align*} 
Since $ (A - \lambda_i I)\mathbf{w}_i $ is orthogonal to $ \mathbf{v}_i $, the left-hand side vanishes. With $ \mathbf{v}_i^* \mathbf{v}_i = 1 $, this simplifies to  
\begin{align*}
\epsilon_i \approx \mathbf{v}_i^* \Delta\mathbf{v}_i.
\end{align*}
Thus, the first-order corrected eigenvalue is  
\begin{align*}
\mu_i \approx \lambda_i + \mathbf{v}_i^* \Delta\mathbf{v}_i.
\end{align*} 
 
\noindent For a diagonal perturbation matrix $ \Delta = \text{diag}(d_1, d_2, d_3) $, this simplifies to  
\begin{align*}
\mathbf{v}_i^* \Delta\mathbf{v}_i = d_1 |v_{i1}|^2 + d_2 |v_{i2}|^2 + d_3 |v_{i3}|^2,
\end{align*}
where $ |v_{ik}|^2 $ represents the squared magnitude of the $ k $-th component of $ \mathbf{v}_i $.  

\begin{figure}[ht]
     \centering
     \includegraphics[width=0.47\linewidth]{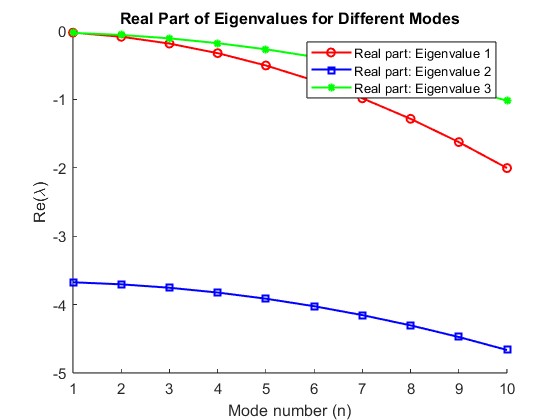}~\includegraphics[width=0.48\linewidth]{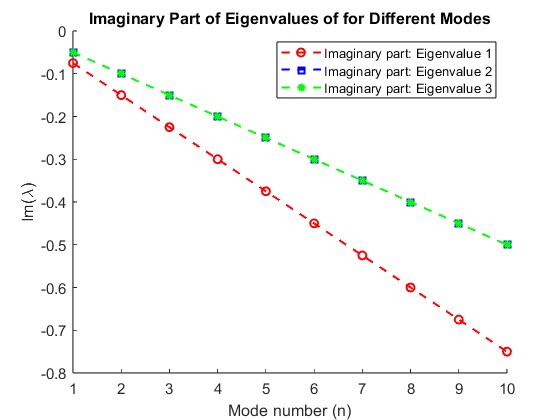}
     \caption{Real and imaginary part of eigenvalues for different modes for the extinction equilibrium $E_0$, respectively. $r=0.7$ and $P_h=0.2$. }
     \label{fig:extinction_positive R0} 
 \end{figure} 

\noindent For the given problem, the perturbation terms are defined as  
\begin{align*}
d_1 = d_2 = -n^2 \alpha - i n \beta_B, \quad d_3 = -n^2 \beta - i n \beta_P.
\end{align*} 

\noindent Thus, the correction becomes  
\begin{align*}
\epsilon_i = -n^2 \left( \alpha (|v_{i1}|^2 + |v_{i2}|^2) + \beta |v_{i3}|^2 \right) - i n \left( \beta_B (|v_{i1}|^2 + |v_{i2}|^2) + \beta_P |v_{i3}|^2 \right).
\end{align*}
 
\noindent Therefore, the first-order corrected eigenvalues of $ J $ are  
\begin{align*}
\mu_i \approx \lambda_i -n^2 \left( \alpha (|v_{i1}|^2 + |v_{i2}|^2) + \beta |v_{i3}|^2 \right) - i n \left( \beta_B (|v_{i1}|^2 + |v_{i2}|^2) + \beta_P |v_{i3}|^2 \right)
\end{align*} 
where $ |v_{ij}|^2 $ represents the squared magnitude of the $j$ th component of the eigenvector $ \mathbf{v}_i $ associated with $ A $.  
As a result, eigenvalues of $J$ can be approximated as
\begin{equation*}
    \text{Re}(\mu_i) \approx \text{Re}(\lambda_i) - n^2(\alpha v_{i1}^2+\alpha v_{i2}^2 +\beta v_{i3}^2) \quad \text{ and }\quad \text{Im}(\mu_i) \approx \text{Im}(\lambda_i)-n(\beta_B v_{i1}^2 + \beta_B v_{i2}^2 + \beta_P v_{i3}^2).
\end{equation*}

The stability of the ODE system is determined by $\lambda_i$. Therefore, we observe that if constant equilibrium $E$ of ODE system is stable and the parameters $\alpha,\beta,\beta_B,\beta_P$ are small, then corresponding homogeneous equilibrium $U$ of PDE system is also stable. Otherwise, there exists $n$ for $U$ to be unstable.

\vspace{2pt}

\noindent We find the exact value of $U^*$ numerically from ODE system for \eqref{model:main} with $\alpha=\beta=0$. For the eigenvalue simulations performed in MATLAB, we take parameters to be
$\alpha = 0.01, \beta = 0.02, \beta_B = 0.05, \beta_P = 0.075, z_m = 5, Q_m = 0.004, Q_M = 0.04, K_{bg} = 0.3, k = 0.0004, I_{in} = 300, H = 120, l = 0.35, D = 0.02, \rho_m = 1, M = 1.5$. We take different $r$ and $P_h$ to consider the following cases:
\begin{itemize}

\item[(1)]  Stability of $U_0=(0,0,\hat{Q})$ when $r=0.7$ and $P_h=0$, so $R_0=0$. When $P_h=0$, we have $R_0=0$, so that the extinction equilibrium is  expected to be stable. We observe negative real parts of eigenvalues for all nodes. See Figure \ref{fig:extinction}. 

\item [(2)] Stability of $U_0=(0,0,\hat{Q})$ when $r=0.7$ and $P_h=0.2$, so  $R_0=0.9494$. See Figure~\ref{fig:extinction_positive R0}.

\item [(3)] Stability of $U^*=(B^*,p^*,P^*)$ when $r=1$, $P_h=0.2$ and $R_0=1.3497>1$. In this case, $U_0$ is linearly unstable, and there is a positive linearly stable  equilibrium $U^*=(16.2785,0.1920    ,0.0080)$, which is found by a numerical simulation of the ODE system for \eqref{model:main}. See Figure~\ref{fig:E0 R0_1.3497} and \ref{fig:Positive equilibrium}.

\begin{figure}[ht]
     \centering
     \includegraphics[width=0.48\linewidth]{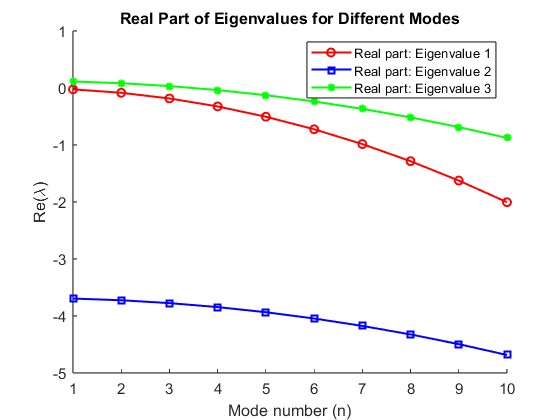}~
     \includegraphics[width=0.48\linewidth]{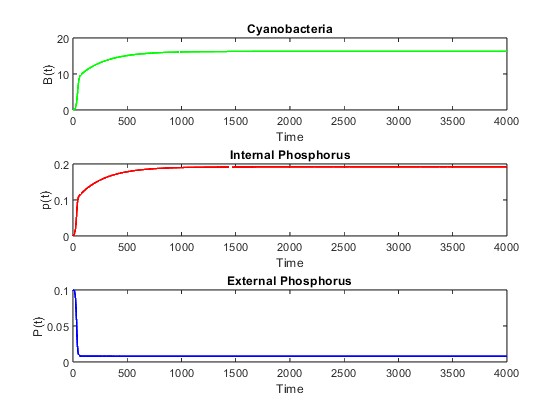}
     \caption{The right figure represents the real part of eigenvalues for different modes for the positive equilibrium $U_0$ when $r=1$ and $P_h=0.2$. The third eigenvalue of $J$ around $U_0$ is positive for $n=1,2$, and 3. With the same parameter, the right figure illustrates the solution of ODE system for \eqref{model:main}. The solution at time $4000$ is considered to be the equilibrium $U^*:=(16.2785,0.1920    ,0.0080)$.}
     \label{fig:E0 R0_1.3497}
\end{figure}

\begin{figure}[ht]
     \centering
     \includegraphics[width=0.48\linewidth]{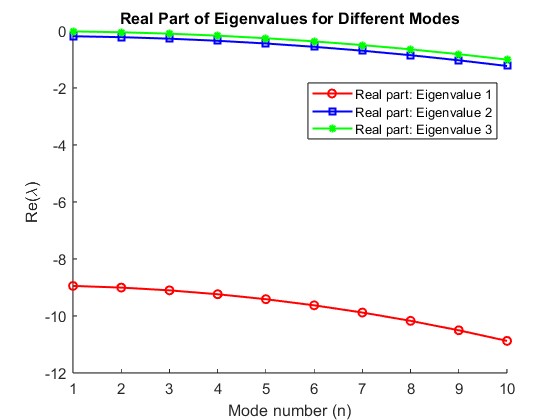}~
     \includegraphics[width=0.48\linewidth]{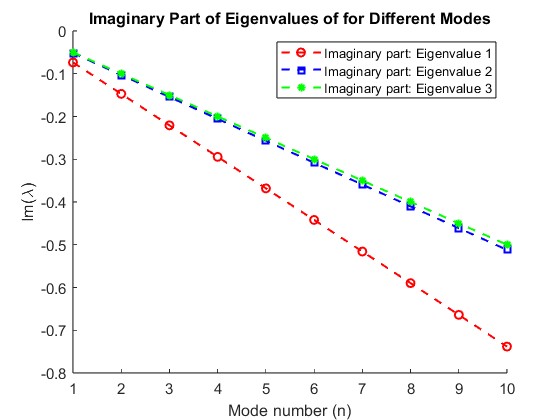}
     \caption{Real and imaginary part of eigenvalues for different modes for the positive equilibrium $U^*=(16.2785, 0.1920, 0.0080)$, respectively. $r=1$ and $P_h=0.2$.}
     \label{fig:Positive equilibrium}
\end{figure}
\end{itemize}

We have observed the stability of the extinction equilibrium $U_0$ and the positive equilibrium $U^*$ numerically. As we found in the first-order perturbation theory, the real part of the numerical eigenvalues are quadratic polynomials in terms of $n$. This implies that a small diffusion of individuals ends up stabilizing our system. The small random movement of cyanobacteria in lakes and minor horizontal movement caused by wind or currents contribute to stabilizing the overall system. This leads us to the following question: What factors drive the spatial patterns of cyanobacteria observed in lakes? Since the random movement of cyanobacteria is generally small, we can consider several possible factors: (1) Movement induced by lake water currents - Unlike the mathematical analysis we performed, the strength and direction of the wind varies spatially and changes over time. As a result, heterogeneous changes in currents may influence the spatial patterns for large time. (2) Variations in vertical exchange across space - Even within a lake, convective processes and water depth differ across locations, leading to variations in the amount of vertical exchange.

\begin{figure}[ht]
    \centering
    \includegraphics[width=\linewidth]{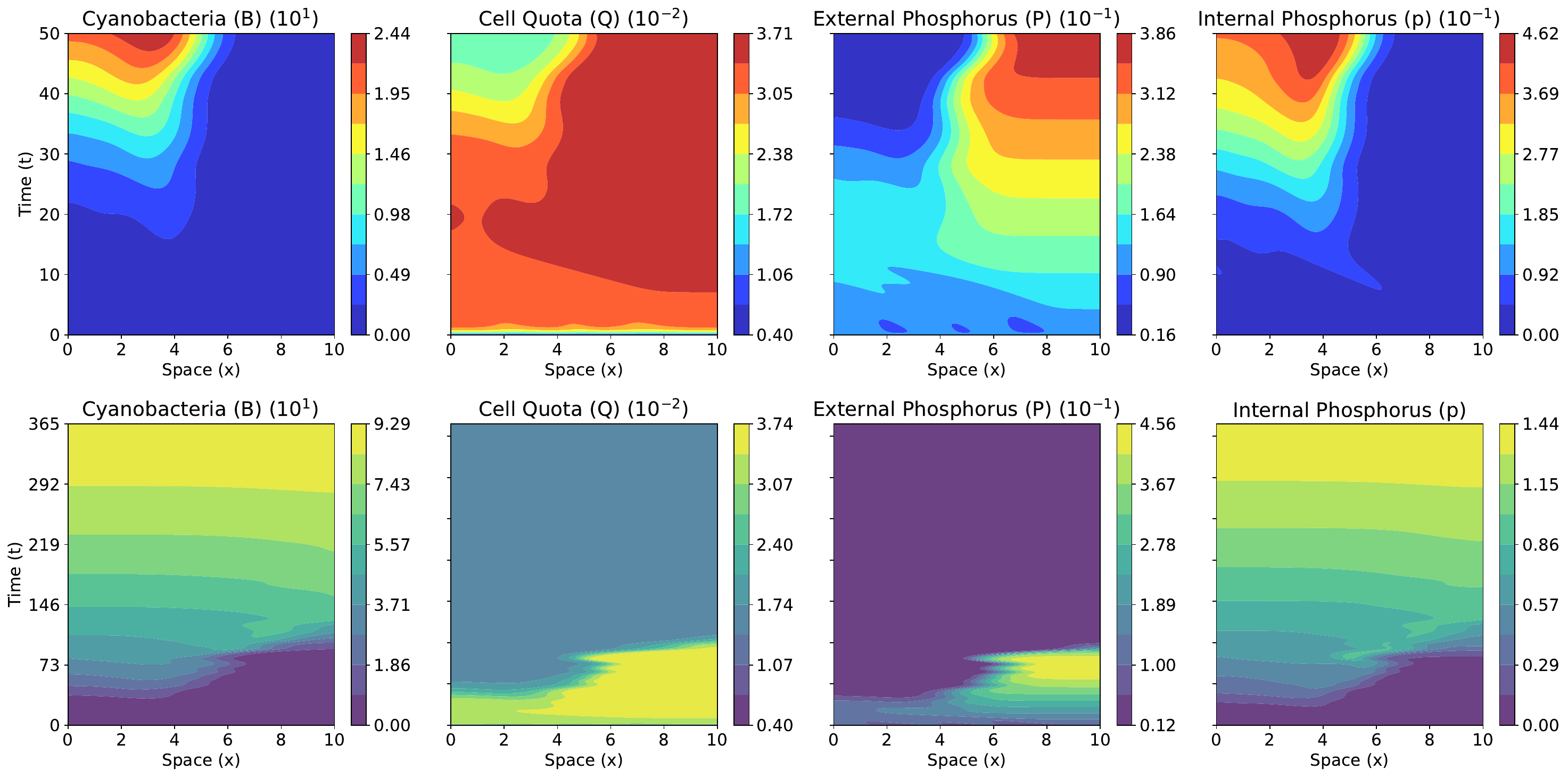}
    \caption{Time series simulations in one dimension of cyanobacteria, cell quota, external and internal phosphorus over short (50 days) and long (365 days) time periods. We interpolated a function to a real wind vector from Pigeon Lake, Alberta, in 2023.}
    \label{fig:1D_Time_Series_Real_Wind}
\end{figure}

\subsection{Time Series Simulations}
In this section, we conduct numerical simulations of system (\ref{model:main}) with the addition of $Q$. We will observe the dynamics of our state variables in both short and long-term timescales. We consider discrete diffusion and advection in space. All simulations are performed in Python. Details can be found in Appendix \ref{ap: numerics}.

We consider a one-dimensional spatial domain $x \in [0,L]$ of model (\ref{model:main}) with the addition of $Q$. We impose Neumann boundary conditions at $x=0$ and $x=L$. The parameters and functions are as described in the model formulation. Using the finite difference method, we discretize the spatial domain and for any spatially dependent variable $U(x,t)$, let $U_i(t) \approx U(x_i,t)$. The first- and second-order spatial derivatives are approximated by central difference schemes, except advection, which is approximated by an upwind scheme. This described discretization transforms the system into a system of ODEs in time for the arrays $B_i(t)$, $Q_i(t)$, $P_i(t)$, and $p_i(t)$. We then integrate the resulting ODE system in time using the BDF method built into Python. Specifically, we observe short (transient) and medium timescales in Figures \ref{fig:1D_Time_Series_Real_Wind}, \ref{fig:1D_Time_Series_No_Wind}, \ref{fig:1D_Time_Series_Fake_Wind}. To observe the asymptotic dynamics of our model we also ran the simulations for 1000 days in Figure $\ref{fig:1D_Stability_no_ph}$.

In addition to the one-dimensional simulations, we numerically solved \eqref{model:main} on a realistic lake-shaped domain using the Firedrake package to perform finite element method (FEM) simulations (\cite{FiredrakeUserManual}). Specifically, we rendered a closed curve of the boundary of Pigeon Lake, Alberta, Canada, from satellite imagery provided by the government of Alberta (\cite{PigeonLake}) and then generated an unstructured triangular mesh of approximately 5,000 elements spanning this domain. The parameters are identical to the one-dimensional simulations, and advection is given by a two-dimensional time-dependent wind vector from local meteorological data in 2023, which we then interpolated. We updated $\vec{v}(t)$ accordingly for each time step to capture wind-driven surface mixing. The finite elements implementation follows our weak formulation from Appendix~\ref{ap:weak_form}, discretizing the domain with Galerkin polynomials for $B,\,p,\,P,$ and $Q$. As shown in Figure~\ref{fig:2D_time_series}, the distribution of cyanobacteria evolves in spatially heterogeneous ways. After about 50-75 days, solutions concentrate in wind-protected inlets if $P_h>0$; otherwise, we observe near-complete depletion away from inflow points when $P_h=0$. These results adhere to our theoretical threshold found in Section \ref{sec:analysis}. 

\subsection{Sobol Indices}

We performed a global sensitivity analysis to investigate how our model's physical lake condition parameters influence cyanobacterial biomass predictions over the simulated time window. In particular, we assessed the relative importance of the following parameters: epilimnion depth $(z_m)$, background light attenuation $(K_{bg})$, water exchange rate $(D)$, external phosphorus input $(p_{in})$, and the cyanobacterial advection coefficient $(\beta_B)$. We utilized Sobol's method (\cite{sobol2001global}), which uses Monte Carlo sampling to evaluate the contributions of individual factors and their interactions in producing the overall variability in the cyanobacteria solution. Sobol's method provides two indices: the first-order Sobol index $S1_f$, which measures how much of the output variance is attributable to a factor $f$ alone, and the total-order Sobol index $ST_f$, which captures both the direct effect of $f$ and all its interactions with the other parameters. Further details can be found in Appendix \ref{ap: numerics}. 

\section{Discussion}

Cyanobacterial blooms (CBs) resulting from complex interactions between nutrient availability and physical lake characteristics are responsible for numerous economic and environmental problems, such as biodiversity loss and water contamination (\cite{paerl2013harmful}). In this paper, cyanobacteria take nutrients from the surrounding water and modulate their internal phosphorus quota under varying light conditions, thereby influencing their growth and spatial distribution. To better understand these dynamics, we developed a reaction-diffusion–advection model that integrates ecological stoichiometry with realistic wind conditions. This methodology enabled us to rigorously characterize how heterogeneity in nutrient inputs and water movement impacts bloom formation and persistence. Theoretical outcomes show that model (\ref{model:main}) behaves well both mathematically and biologically. According to the results in Section \ref{sec:analysis} and corresponding remarks, some critical thresholds for CBs are rigorously derived. Specifically, we observe cyanobacteria's basic ecological reproductive indices (see Section \ref{sec:analysis}) through various transformations and classical theoretical techniques.

After approximately 300 days, each system—no movement, synthetic wind, and real data-driven wind—displays qualitatively similar behaviour for each set of one-dimensional simulations. This phenomenon occurs despite heterogeneous initial conditions because cyanobacteria eventually saturate the available space during a bloom. Notably, while differences in the magnitude of external phosphorus are evident across the movement-based simulations, the extreme values of cyanobacteria concentration, internal phosphorus, and thus cell quota remain relatively consistent. In other words, once the cyanobacterial population exceeds a critical threshold, advective and diffusive movement exerts a diminished influence on overall dynamics (\cite{huisman1994, paerl2008blooms}). Our two-dimensional simulations on a lake-shaped domain show the complex interactions of local land geometry, wind-driven advection, and nutrient stoichiometry related to CBs. The irregular boundary geometry produces concentration hot spots along sharply curving shorelines (Figure \ref{fig:2D_time_series}). This phenomenon partially corroborates earlier one-dimensional findings- once a critical threshold of dissolved phosphorus is met, strong gradients in cyanobacteria emerge, particularly where wind currents stagnate or recirculate. From a water-resource management viewpoint, these spatially explicit results emphasize the utility of controlling inflow nutrients in shallow or sheltered segments of the lake boundary, where wind-driven advection may be insufficient to dilute bloom-prone regions.

To further investigate the steady-state behaviour of our model, we simulated the system for 1000 days using a finite difference scheme (Figure \ref{fig:1D_Stability_no_ph}). When \(P_h = C > 0\), after the cyanobacteria deplete the initial phosphorus supply, the model settles into an internally maintained constant steady state that is asymptotically stable for all future time (Figure \ref{fig:1D_Stability_no_ph}). Conversely, if \(P_h = 0\), once the phosphorus is exhausted, the extinction equilibrium is reached and remains stable. In dynamical systems terms, \(P_h = 0\) represents a bifurcation point, marking a qualitative change in the system’s long-term behaviour. These outcomes are consistent across all simulation scenarios and align with the analysis presented in Section~3.4. These results highlight the need to monitor nutrient inputs from anthropogenic sources closely (\cite{zhang2022, zhao2014, paerl2013harmful}).

Our global sensitivity analysis, implemented via Sobol indices (\cite{sobol2001global, herman2017salib}), reveals that light attenuation and intensity consistently dominate cyanobacterial growth. In contrast, epilimnion depth and water exchange rate exert a relatively constant yet minor influence over the simulation period. Interestingly, the relative impact of input phosphorus becomes more pronounced over time. This trend resonates with previous observational and modelling studies (\cite{gao2020, du2022, reynolds2006}). Moreover, the advection coefficient proves significantly more influential when the model incorporates real wind data than an idealized oscillatory wind function. This is likely due to the spatial symmetry induced by the sinusoidal forcing, which tends to homogenize the initial conditions before bloom formation (\cite{cao2006, huisman2018cyanobacterial}). Overall, these findings reinforce the dominant role of light and nutrient dynamics in bloom development and highlight the importance of incorporating realistic physical forcing in future modelling efforts (\cite{qin2009, paerl2008blooms}).

In model (\ref{model:main}), we do not consider the presence of any predatory species. Introducing a grazer into the system could provide insights into how spatial heterogeneity affects toxin dispersal and bioaccumulation in other species at higher trophic levels. Further, it may be helpful to incorporate a third spatial dimension in the form of water depth to consider other species, such as algae or fish. The explicit calculation of the internal steady state could benefit scientists and policymakers by providing precise guidelines for the quantity of cyanobacteria, given the different parameter values. Most existing reaction-diffusion models incorporating nutrient stoichiometry via a Droop formulation focus on diffusion in one-dimensional space, with limited consideration of reaction terms beyond growth (\cite{hsu2010,hsu2014,Hsu2017}). By contrast, our work integrates wind-driven advection and nutrient stoichiometry in a two-dimensional setting to capture more realistic bloom dynamics. This approach better explains how environmental forcing and boundary geometry interact to shape CBs, providing insights that can inform future research and resource management strategies.

\begin{figure}[ht]
    \centering
    \includegraphics[width=\linewidth]{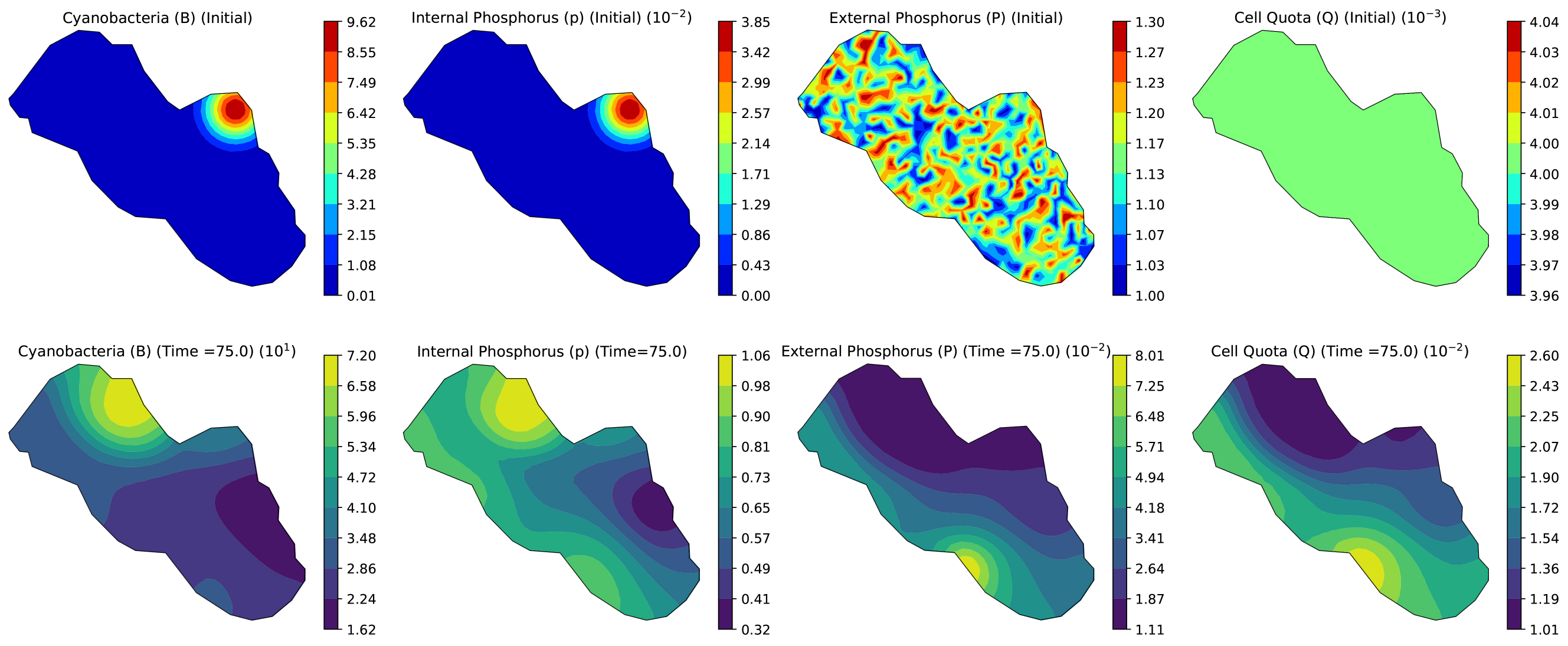}
    \caption{Time series simulations in two dimensions of cyanobacteria, cell quota, external and internal phosphorus over some day time period when $P_h = 2$. We interpolated a function to a real wind vector from Pigeon Lake, Alberta, in 2023. We used the finite elements method and satellite image data to simulate the system on a realistic lake domain.}
    \label{fig:2D_time_series}
\end{figure}

\begin{figure}[ht]
    \centering
    \includegraphics[width=\linewidth]{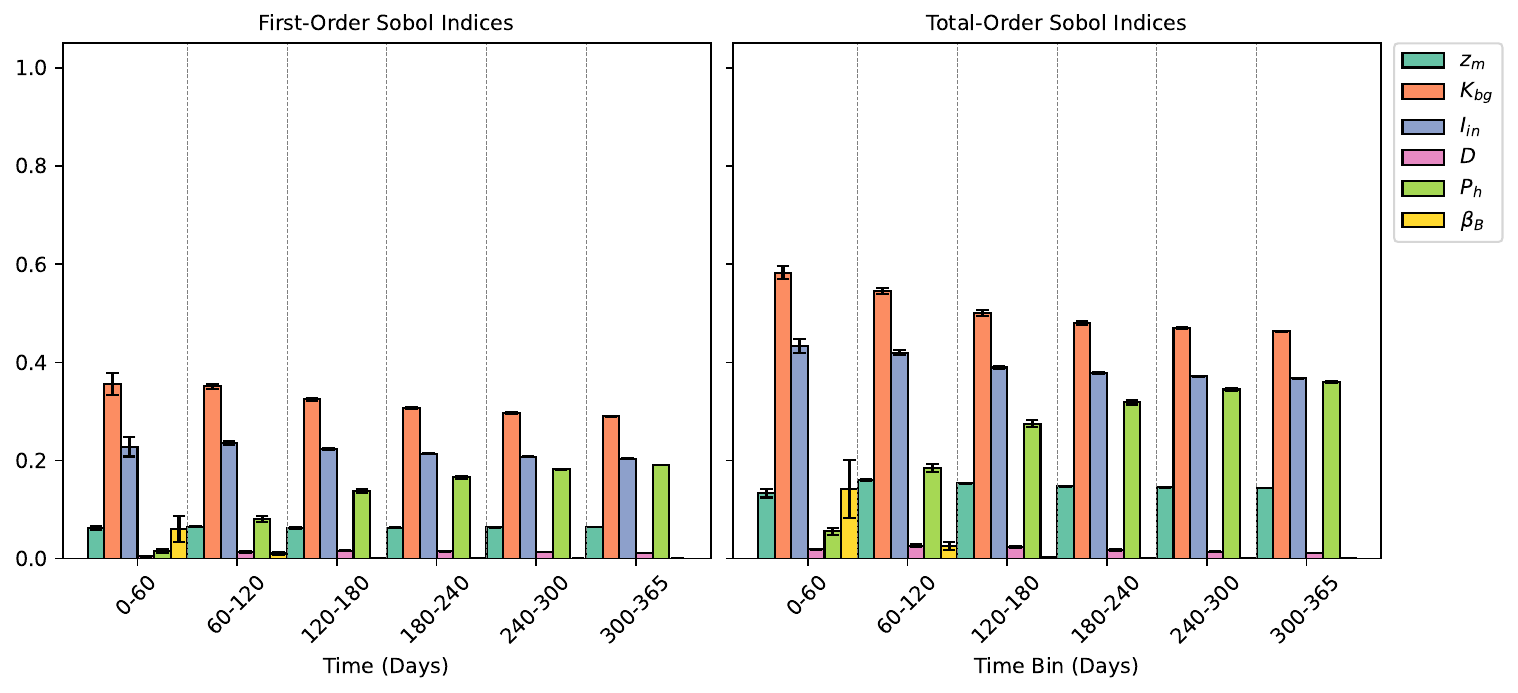}
    \caption{Global sensitivity analysis using first- and total-order Sobol indices to compare the impact of physical lake conditions on the growth of cyanobacteria over space and time. The simulation is for 365 days, with contributions binned into ~60 day intervals. We interpolated a function to a real wind vector from Pigeon Lake, Alberta, in 2023.}
    \label{fig: sobol_real_wind}
\end{figure}

\clearpage
\newpage

\appendix
\section{\texorpdfstring{Formal derivation of dispersal of $B,p,Q$}{Formal derivation of dispersal of B, p, Q}}

\label{ap: deriveQdiffusion}
Let us consider a discrete time and space model on one-dimensional space.
Let $p_{n}^{i}$ and $B_{n}^{i}$ be the number of internal phosphorus and cyanobacteria at location $i$ and time $n$, respectively. Let us denote by the cell quota $Q_{n}^{i} := \frac{p_{n}^{i}}{B_{n}^{i}}$. We assume that if cyanobacteria moves randomly with the dispersal rate $d$ then the internal phosphorus $p$ also moves along the same path for the bacteria with the same rate $d$. 
For each $i$-th patch, we can calculate
\begin{equation*}
\begin{aligned}
    &B_{n+1}^{i} = B^{i}_n +d_{ii+1}B_{n}^{i+1} - d_{i+1i}B_{n}^{i} - d_{i-1i}B_{n}^{i} + d_{i-1i}B_{n}^{i-1}\\
    &p_{n+1}^{i} = p^{i}_n + d_{ii+1}p_{n}^{i+1} - d_{i+1i}p_{n}^{i} - d_{i-1i}p_{n}^{i} + d_{i-1i}p_{n}^{i-1},
\end{aligned} 
\end{equation*} where $d_{ji}$ is a dispersal rate at which bacteria move from the $i$-th patch to the $j$-th patch. Cyanobacteria are mainly dispersed through ocean currents, wind-driven mixing, and wave action. These passive processes allow their population to spread over vast areas, reducing local competition for vital resources like nutrients and sunlight. By distributing across diverse environments, cyanobacteria minimize the risk of localized resource depletion or environmental stresses that could lead to population collapse.
In this context, we assume that the dispersal rate $d_{ji}$ of cyanobacteria follows Fick's law, which assumes uniform dispersal between patches. Denoting by $d_{i+1i}=d_{ii+1}=d_{i+1/2}$ for each $i$, the resulting equations approximate diffusion processes of the Fickian type,
\begin{align*}
    B_t = (d(x)B_x)_{x}\quad \text{ and }\quad
    p_t = (d(x)p_x)_{x}.
\end{align*}

In order to explain a diffusive movement of the ratio $Q$ when $B$ and $p$ diffuse, it is necessary to derive a diffusion equation of the cell quota $Q_n^i$. Indeed,
 since $p_{n+1}^i=Q_{n+1}^i B_{n+1}^i$, 
\begin{align*}
    Q_{n+1}^iB_{n+1}^i &= p_{n+1}^i \notag\\
    &= p_{n}^i+d_{i+1/2}(p_{n}^{i+1} - p_{n}^{i})  - d_{i-1/2}(p_{n}^{i} - p_{n}^{i-1})\notag\\
    &= Q_{n}^iB_{n}^i+d_{i+1/2}(Q_{n}^{i+1}B_{n}^{i+1} - Q_{n}^{i}B_{n}^{i})  - d_{i-1/2}(Q_{n}^{i}B_{n}^{i} - Q_{n}^{i-1}B_{n}^{i-1})\notag
\end{align*} Subtracting $Q_n^iB^i_{n+1}$ both sides gives
\begin{align*}
    (Q_{n+1}^{i}-Q_{n}^{i})B_{n+1}^i&= -Q_{n}^i(d_{i+1/2}(B_{n}^{i+1} - B_{n}^{i})  - d_{i-1/2}(B_{n}^{i} - B_{n}^{i-1}))\\&+d_{i+1/2}(Q_{n}^{i+1}B_{n}^{i+1} - Q_{n}^{i}B_{n}^{i})  - d_{i-1/2}(Q_{n}^{i}B_{n}^{i} - Q_{n}^{i-1}B_{n}^{i-1})\notag
\end{align*} where we have used that $B_{n+1}^i - B^{i}_{n}= d_{i+1/2}(B_{n}^{i+1} - B_{n}^{i})  - d_{i-1/2}(B_{n}^{i} - B_{n}^{i-1})$.
Then we can approximate the equation to a continuous equation
\begin{equation*}
\begin{aligned}
    B\partial_t Q &= -Q(d(x)B_x)_{x}+(d(x)(BQ)_x)_{x},\\
    &= B(d(x)Q_x)_x + 2d(x)B_xQ_x\end{aligned}
\end{equation*} so that
\begin{equation}\label{eqn:Qdiffusion}
     \partial_t Q = (d(x)Q_x)_x +2\frac{d(x)}{B}B_xQ_x = \frac{1}{B^2}(d(x)B^2Q_x)_x
\end{equation} where $d(x)$ is the dispersal rate of cyanobacteria. It is given by a positive function in terms of spatial variable $x$. 
We also derive \eqref{eqn:Qdiffusion} with a continuous model. We assume to have $\partial_tB=\nabla\cdot (d(x)\nabla B)$ and $\partial_tp=\nabla\cdot(d(x) \nabla p)$. We introduce the variable $Q=\frac{p}{B}$ and derive
\begin{align*}
    \frac{\partial Q}{\partial t} = \frac{\partial (p/B)}{\partial t} &= \left(\frac{\nabla\cdot(d(x) \nabla p)}{B}-\frac{p}{B^2}\nabla\cdot(d(x) \nabla B))\right) \\
    &= B^{-1}\big(\nabla\cdot(d(x) \nabla (BQ)) -  Q\nabla\cdot(d(x) \nabla B) \big)\\
    &=B^{-1}\big(\nabla\cdot(d(x)B\nabla Q) + d(x)\nabla B\cdot\nabla Q\big)\\
    &=\nabla\cdot(d(x)\nabla Q) + \frac{2}{B}\nabla (d(x)B)\cdot\nabla Q\\
    &= \frac{1}{B^2}\nabla\cdot(d(x)B^2\nabla Q).\\
\end{align*} 

\noindent If $d(x)\equiv \alpha$ and $B$ is constant, we have 
\[  \partial_t Q = \alpha Q_{xx}. \]
Notice that the heterogeneity in the density of cyanobacteria gives rise to failure of conservation law, which is represented by the denominator $B^2$ in \eqref{eqn:Qdiffusion}.

\section{Numerical supplementation}\label{ap: numerics}

\subsection{One dimensional finite difference}

The one-dimensional version of model (\ref{model:main}) is given by
\begin{equation}
\label{model:1D}
\begin{aligned}
\frac{\partial B}{\partial t} &= \alpha \frac{\partial^2 B}{\partial x^2} - \beta_B v(t)\frac{\partial B}{\partial x} \;+\; r B\left(1 - \frac{Q_m}{Q}\right)h(B) \;-\; l B \;-\; \frac{D}{z_m}B, \\[6pt]
\frac{\partial Q}{\partial t} &= \alpha \frac{\partial^2 Q}{\partial x^2} \;+\; \biggl(2\alpha\frac{\partial B/\partial x}{B} - \beta_B v(t)\biggr)\frac{\partial Q}{\partial x} \;+\; \rho(Q,P) \;-\; r Q\left(1-\frac{Q_m}{Q}\right)h(B), \\[6pt]
\frac{\partial P}{\partial t} &= \beta \frac{\partial^2 P}{\partial x^2} - \beta_P v(t)\frac{\partial P}{\partial x} \;+\; \frac{D}{z_m}(P_h - P) \;-\; \rho(Q,P)B \;+\; lQB, \\[6pt]
\frac{\partial p}{\partial t} &= \alpha \frac{\partial^2 p}{\partial x^2} - \beta_B v(t)\frac{\partial p}{\partial x} \;+\; \eta(B,p,P) \;-\; l p \;-\; \frac{D}{z_m}p.
\end{aligned}
\end{equation}

\noindent We impose Neumann boundary conditions at $x=0$ and $x=L$:
\begin{equation}
\frac{\partial B}{\partial x} = \frac{\partial Q}{\partial x} = \frac{\partial P}{\partial x} = \frac{\partial p}{\partial x} = 0 \quad \text{at } x=0, L.
\end{equation}

\noindent Using the finite difference method, we discretize the spatial domain into $N_x$ uniform grid points, where
\[
x_i = i\Delta x,\quad i=0,1,\dots,N_x-1,\quad \text{where}\ \Delta x = \frac{L}{N_x-1}.
\]
For any spatially dependent variable $U(x,t)$, let $U_i(t) \approx U(x_i,t)$. The second-order spatial derivatives are approximated by second-order central differences
\[
\frac{\partial^2 U}{\partial x^2}(x_i,t) \approx \frac{U_{i+1}(t) - 2U_i(t) + U_{i-1}(t)}{\Delta x^2}.
\]
First-order spatial derivatives are approximated by a second-order central difference
\[
\frac{\partial U}{\partial x}(x_i,t) \approx \frac{U_{i+1}(t) - U_{i-1}(t)}{2\Delta x}.
\]

\noindent At the boundaries ($i=0$ and $i=N_x-1$), we apply the Neumann conditions
\[
\frac{\partial U}{\partial x}(x_0,t) \approx \frac{U_1(t)-U_0(t)}{\Delta x} = 0 \implies U_1(t)=U_0(t),
\]
and similarly at $x_{N_x-1}$
\[
\frac{\partial U}{\partial x}(x_{N_x-1},t) \approx \frac{U_{N_x-1}(t)-U_{N_x-2}(t)}{\Delta x} = 0 \implies U_{N_x-1}(t)=U_{N_x-2}(t).
\]

\noindent This discretization transforms the PDE system into a system of ODEs in time for the arrays $B_i(t)$, $Q_i(t)$, $P_i(t)$, and $p_i(t)$. We then integrate the resulting ODE system in time using the BDF method built into Python.

\subsection{Two-dimensional finite elements}
\label{ap:weak_form}

Here, we detail the derivation of the weak formulation for our stoichiometric PDE system \eqref{model:main} on a two-dimensional lake-shaped domain~\(\Omega \subset \mathbb{R}^2\). Recall that \(\partial \Omega\) denotes the boundary of the lake, where we impose homogeneous Neumann conditions. Let \(T>0\) be the final time of interest. Further, suppose 
\begin{align*}
B(\cdot,t),\;p(\cdot,t),\;P(\cdot,t)\;\in\;C\bigl([\,0,T\,],\,H^1(\Omega)\bigr)\;\cap\;L^2\bigl(0,T;\,H^2(\Omega)\bigr),
\end{align*}
with
\begin{align*}
\frac{\partial B}{\partial n} \;=\;\frac{\partial p}{\partial n} \;=\;\frac{\partial P}{\partial n} \;=\;0
\quad\text{on}\quad \partial\Omega\times(0,T),
\end{align*}
where \(n\) is the outward normal. We define
\[
V \;=\;\Bigl\{\, \phi \in H^1(\Omega)\;:\;\partial_n \phi \big|_{\partial\Omega}=0\Bigr\}.
\]
We will write \(v_1,v_2,v_3\) for test functions associated with \(B,p,P\), respectively. Given
\[
\frac{\partial B}{\partial t} -\alpha\,\Delta B +\beta_B\,\vec{v}(t)\cdot\nabla B 
\;=\; r\Bigl(1-\frac{Q_m\,B}{p+\varepsilon}\Bigr)h(B)\,B - l\,B -\frac{D}{z_m}\,B.
\]
Multiply both sides by a test function \(v_1\in V\) and integrate over \(\Omega\):
\begin{align}
&\int_{\Omega}\,\frac{\partial B}{\partial t}\,v_1\,d\mathbf{x}
\;-\;\alpha\int_{\Omega}\,\Delta B\,v_1\,d\mathbf{x}
\;+\;\beta_B\int_{\Omega}\,\bigl(\vec{v}\cdot\nabla B\bigr)\,v_1\,d\mathbf{x}
\;=\;\int_{\Omega}\Bigl[r\Bigl(1-\frac{Q_m\,B}{p+\varepsilon}\Bigr)h(B)\,B - lB - \tfrac{D}{z_m}B\Bigr]v_1\,d\mathbf{x} \\
\implies &\int_{\Omega}\,\frac{\partial B}{\partial t}\,v_1\,d\mathbf{x}
\;+\;\alpha\int_{\Omega}\,\nabla B\cdot\nabla v_1\,d\mathbf{x}
\;+\;\beta_B\int_{\Omega}\,\bigl(\vec{v}\cdot\nabla B\bigr)\,v_1\,d\mathbf{x}
\;=\;\int_{\Omega}\Bigl[r\Bigl(1-\tfrac{Q_m\,B}{p+\varepsilon}\Bigr)h(B)\,B - l\,B -\tfrac{D}{z_m}B\Bigr]\,v_1\,d\mathbf{x}.
\end{align}

We perform a similar method for \(p\) and \(P\). Taking \(v_2\in V\) as the test function for the second equation, we see that
\[
\int_{\Omega}\,\frac{\partial p}{\partial t}\,v_2\,d\mathbf{x}
\;+\;\alpha\int_{\Omega}\,\nabla p\cdot\nabla v_2\,d\mathbf{x}
\;+\;\beta_B\int_{\Omega}\,\bigl(\vec{v}\cdot\nabla p\bigr)\,v_2\,d\mathbf{x}
\;=\;\int_{\Omega}\,\Bigl[\eta(B,p,P)\;-\;l\,p\;-\;\tfrac{D}{z_m}p\Bigr]\,v_2\,d\mathbf{x}.
\]
and
\begin{align*}
\int_{\Omega}\,\frac{\partial P}{\partial t}\,v_3\,d\mathbf{x}
&\;+\;\beta\int_{\Omega}\,\nabla P\cdot\nabla v_3\,d\mathbf{x}
\;+\;\beta_P\int_{\Omega}\,\bigl(\vec{v}\cdot\nabla P\bigr)\,v_3\,d\mathbf{x}
=\;\int_{\Omega}\,\Bigl[\tfrac{D}{z_m}\bigl(P_h - P\bigr)\;-\;\eta(B,p,P)\,B \;+\;l\,p\Bigr]\,v_3\,d\mathbf{x}.
\end{align*}

Let \(\Delta t>0\) be a uniform time step, and define times \(t^n = n\,\Delta t\) for \(n=0,1,\dots,N\) such that \(N\Delta t=T\). A fully discrete backward Euler method for \(B\) is:
\[
\int_{\Omega}\,\frac{\,B^{n+1}-B^n\,}{\,\Delta t\,}\,v_1\,d\mathbf{x}
\;+\;\alpha\int_{\Omega}\,\nabla B^{n+1}\cdot\nabla v_1\,d\mathbf{x}
\;+\;\beta_B\int_{\Omega}\,\bigl(\vec{v}^{n+1}\cdot\nabla B^{n+1}\bigr)\,v_1\,d\mathbf{x}
\;=\;\int_{\Omega}R_B\bigl(B^{n+1},p^{n+1}\bigr)\,v_1\,d\mathbf{x},
\]
where \(R_B(\cdot)\) indicates the reaction terms 
\[
R_B(B^{n+1},p^{n+1}) \;=\;r\bigl(1-\tfrac{Q_m\,B^{n+1}}{\,p^{n+1}+\varepsilon}\bigr)\,h(B^{n+1})\,B^{n+1} \;-\;l\,B^{n+1} \;-\;\tfrac{D}{z_m}\,B^{n+1}.
\]
The forms for \(p\) and \(P\) are identical in structure, each having the discrete-time derivative, diffusion, advection, and reaction integrated against test functions. This set of discrete equations is then solved at each time step \(n\) using a nonlinear solver. The updated solutions \(\bigl(B^{n+1},p^{n+1},P^{n+1}\bigr)\) become the initial condition for the next step. After the final step \(N\), we evaluate \(Q^{n+1} = p^{n+1}/B^{n+1}\) in a pointwise manner. We took the shoreline of the Pigeon Lake, Alberta, Canada provided by the provincial government in EPSG: 3400 system, and then we re-projected into  EPSG: 4326, Figure \ref{fig: lake_shape}. We produced a two-dimensional triangular mesh using \textit{gmsh}, an open-source 3D finite element mesh generator that provide a fast, lightweight for mesh generation (\cite{geuzaine2009gmsh}) ensuring element diameters around 50m on average. The boundary $\partial\Omega$ thus represents the lake perimeter, where we impose Neumann conditions. Real wind data $(u(t),\,v(t))$ were measured at hourly intervals. We aggregated these into a daily time series, then applied an Akima interpolation to obtain a smooth $\vec{v}(t)$ for each PDE time step. In practice, we clamp the change of $\|\vec{v}(t)\|$ to a maximum of 5 m/s if outliers appear. For each $\Delta t$ in the solver, we update $\vec{v}(t)$ accordingly. This vector is assumed uniform in space. We use the Firedrake library (\cite{FiredrakeUserManual}) to implement the aforementioned weak forms. We let $V$ be a continuous Galerkin space of piecewise polynomials. We form the mixed function space $W = V\times V\times V\times V$, then defined
\[
U = (B,\,p,\,P,\,Q)\in W,\quad 
v = (v_1,\,v_2,\,v_3,\,v_4)\in W.
\]
We write the residual forms $F_1,\dots,F_4$ as in section \ref{sec:modelformulation}, each integrated over $\Omega$. The Neumann boundary condition is automatically enforced. We assemble these forms into a global problem $F(U)=0$ in Firedrake,
\begin{align*}
F(U) \;=\; \sum_{i=1}^4 F_i(B,p,P,Q),
\end{align*}
and solve with a backward Euler time discretization. Our code sets up a \textit{NonlinearVariationalSolver} with an \textit{ILU} or \textit{bjacobi} preconditioner. After each time step, we update $U_n \leftarrow U$ and proceed until final time $T_{\mathrm{final}}$. Finally, we tested the mesh resolution by halving average triangle diameters, confirming that solutions of $B,\,p,\,P$ converge within $\approx2\%$ difference (in $L^2(\Omega)$ norm) by $t=50$ days. We also verified that artificially removing wind advection reproduced earlier purely diffusive solutions. The full-length animations of each state variable can be found in the supporting information.

\subsection{Sobol Indices}
For the Sobol Indices, the first-order index for factor $f$ at time $t$ is defined as
\[
S1_f(t) \;=\; \frac{\mathrm{Var}\bigl[\mathbb{E}[\,B(t)\mid X_f]\bigr]}{\mathrm{Var}\bigl[B(t)\bigr]},
\]
while the total-order index is given by
\[
ST_f(t) \;=\; 1 \;-\; \frac{\mathrm{Var}\Bigl[\mathbb{E}_{\sim f}\bigl[B(t)\mid f\bigr]\Bigr]}{\mathrm{Var}\bigl[B(t)\bigr]},
\]
where $\mathbb{E}[\,B(t)\mid X_f]$ is the conditional expectation of $B(t)$ given $X_f$ (the factor of interest), and $\mathbb{E}_{\sim f}[B(t)\mid f]$ denotes the expectation with respect to all parameters except $f$.

In our implementation, we generated 2048 samples for each analysis using Saltelli's extension of the Sobol sequence through the \texttt{SALib} Python library (\cite{herman2017salib}). Each sampled configuration provides a unique set of parameter values. We considered the following ranges: $z_m \in [2.0,\; 10.0]$, $K_{bg} \in [0.1,\; 1.0]$, $D \in [0.01,\; 0.1]$, $p_{in} \in [0.0,\; 0.3]$, $\beta_B \in [0.01,\; 0.1].$ Each parameter set was used to initialize and solve our model for the entire time window, storing the resulting spatiotemporal evolution of $B(x,t)$. We then computed $S1_f(t)$ and $ST_f(t)$ across different time intervals and averaged over the spatial grid to assess how each parameter shapes the predicted cyanobacteria distribution. Additionally, to quantify how much variability in sensitivity arises spatially, we calculated the standard deviation of $(S1_f)_\ell$ and $(ST_f)_\ell$ over $\ell=1,\dots, N_x N_y$, which provided the error bars in our final bar plots. Therefore, the mean impact of each parameter on cyanobacteria growth and the extent to which that impact varies across different spatial locations in the domain. For plotting purposes, we split the solution into smaller time bins of approximately 60 days to track how the relative influences of these parameters vary throughout the simulation.

\section{Supplemental Figures}

\begin{figure}[ht]
    \centering
    \includegraphics[width=\linewidth]{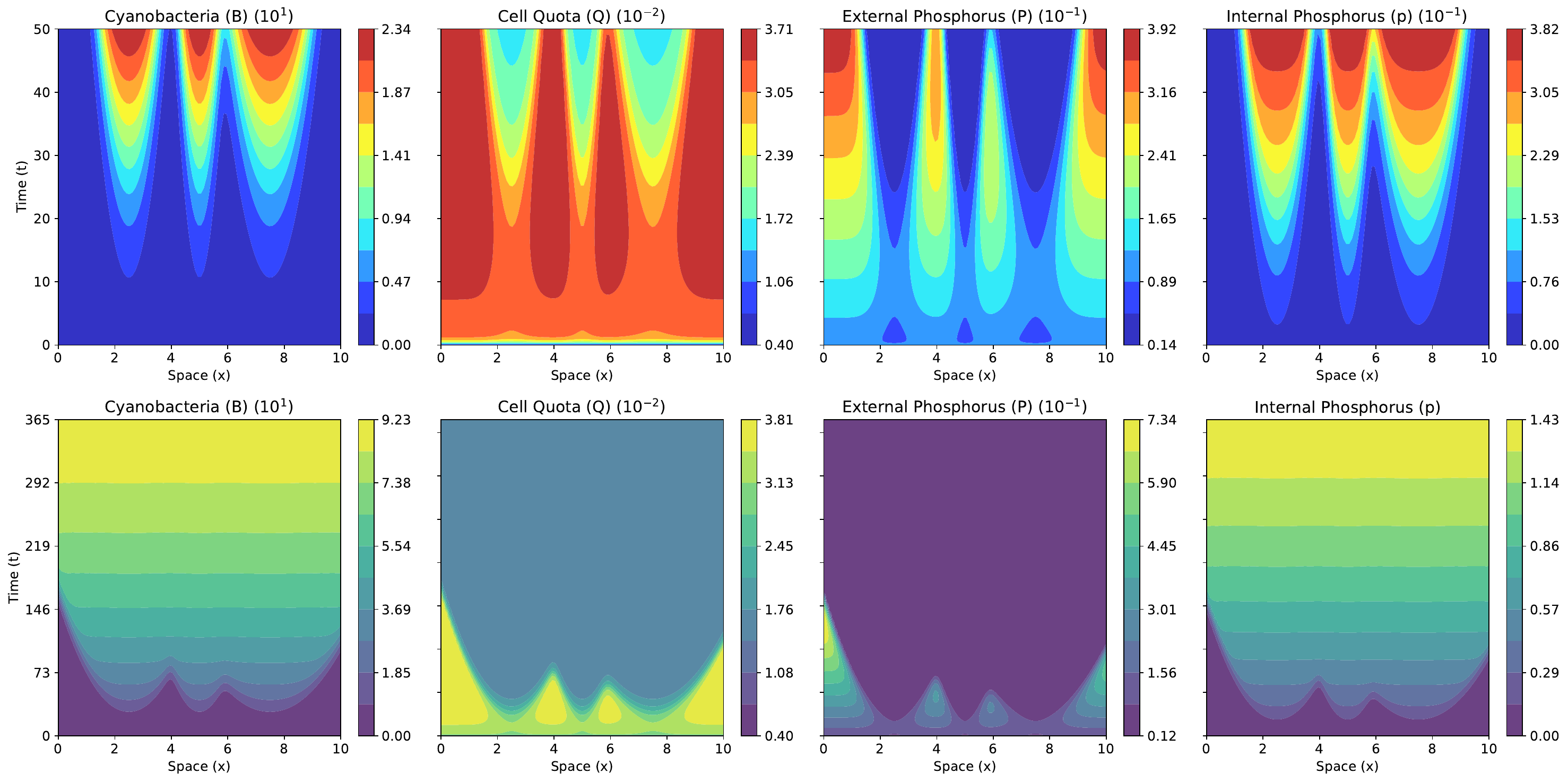}
    \caption{Time series simulations in one dimension of cyanobacteria, cell quota, external and internal phosphorus over short (50 days) and long (365 days) time periods. TThere was no wind/movement incorporated for the solution.}
    \label{fig:1D_Time_Series_No_Wind}
\end{figure}

\begin{figure}[ht]
    \centering
    \includegraphics[width=\linewidth]{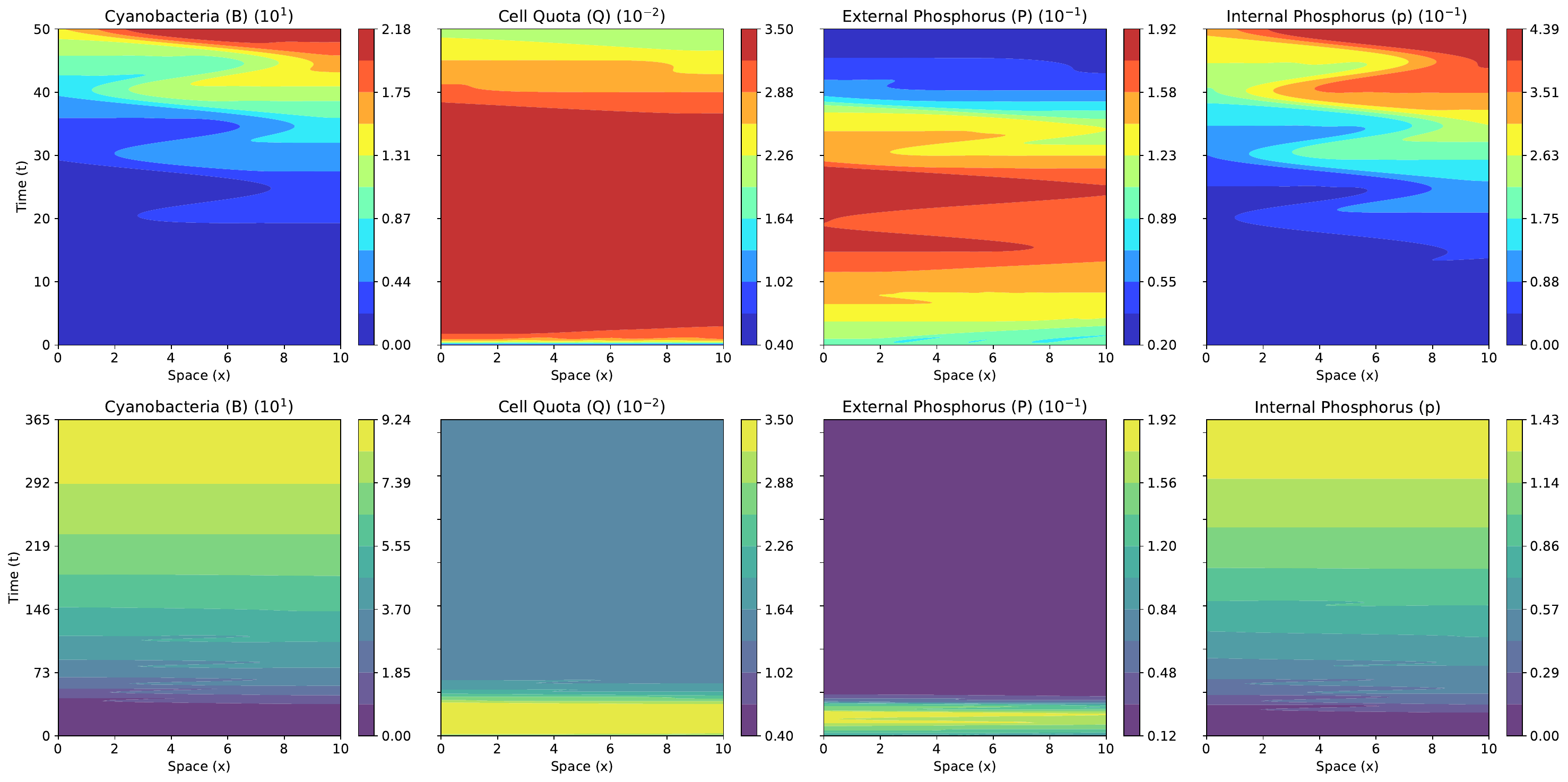}
    \caption{Time series simulations in one dimension of cyanobacteria, cell quota, external and internal phosphorus over short (50 days) and long (365 days) time periods. We used an oscillatory wind function to simulate periodic movement.}
    \label{fig:1D_Time_Series_Fake_Wind}
\end{figure}

\begin{figure}[ht]
    \centering
    \includegraphics[width=\linewidth]{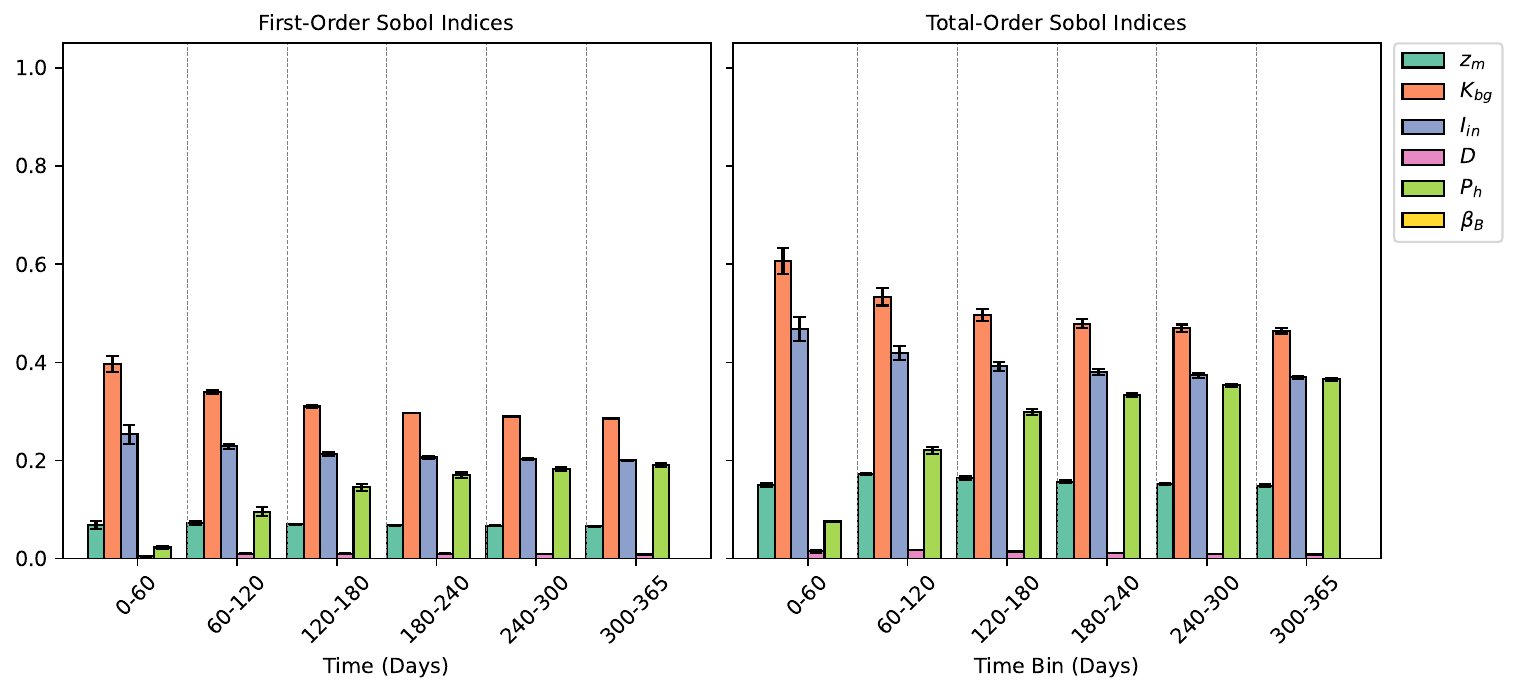}
    \caption{Global sensitivity analysis using first- and total-order Sobol indices to compare the impact of physical lake conditions on the growth of cyanobacteria over space and time. The simulation is for 365 days, with contributions binned into ~60 day intervals. There was no wind/movement incorporated for the solution.}
    \label{fig: sobol_no_wind}
\end{figure}

\begin{figure}[ht]
    \centering
    \includegraphics[width=\linewidth]{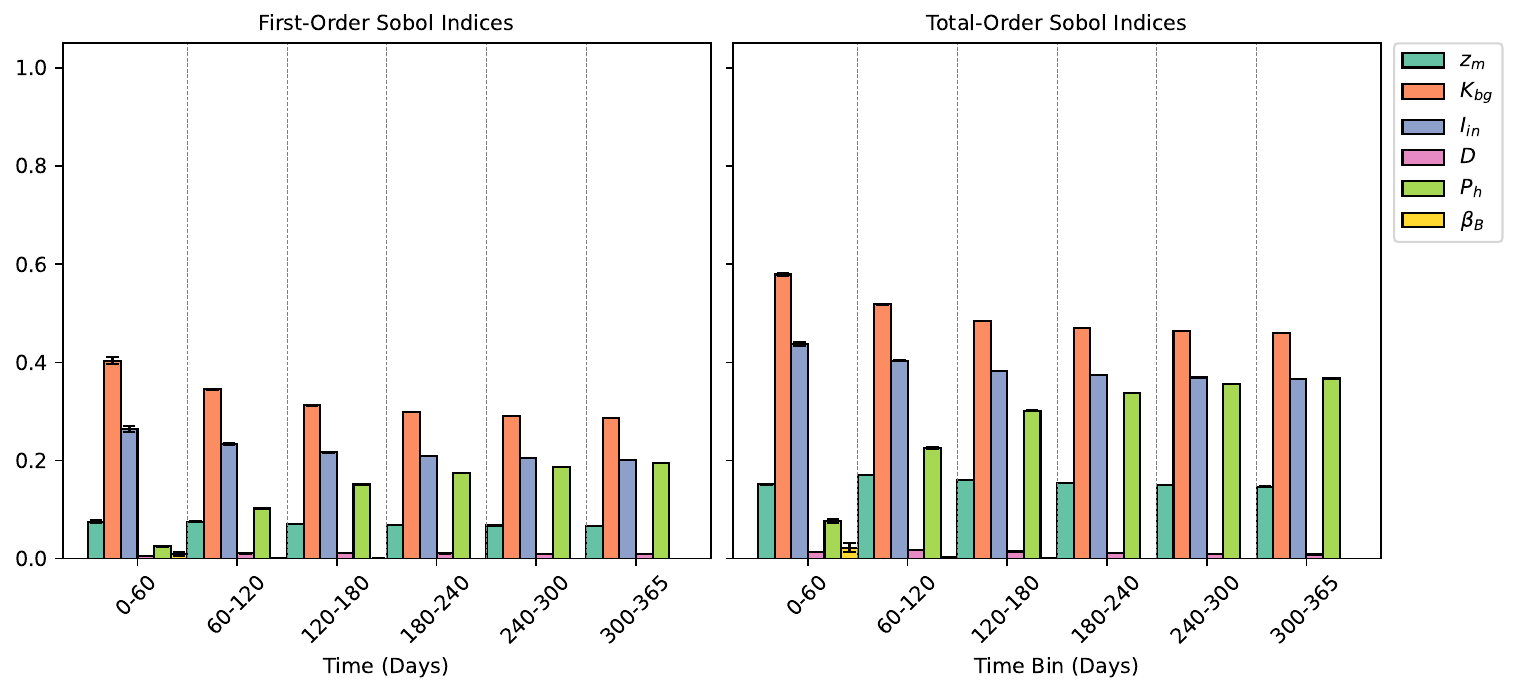}
    \caption{Global sensitivity analysis using first- and total-order Sobol indices to compare the impact of physical lake conditions on the growth of cyanobacteria over space and time. The simulation is for 365 days, with contributions binned into ~60 day intervals. We used an oscillatory wind function to simulate periodic movement.}
    \label{fig: sobol_fake_wind}
\end{figure}

\begin{figure}[ht]
    \centering  \includegraphics[width=0.7\linewidth]{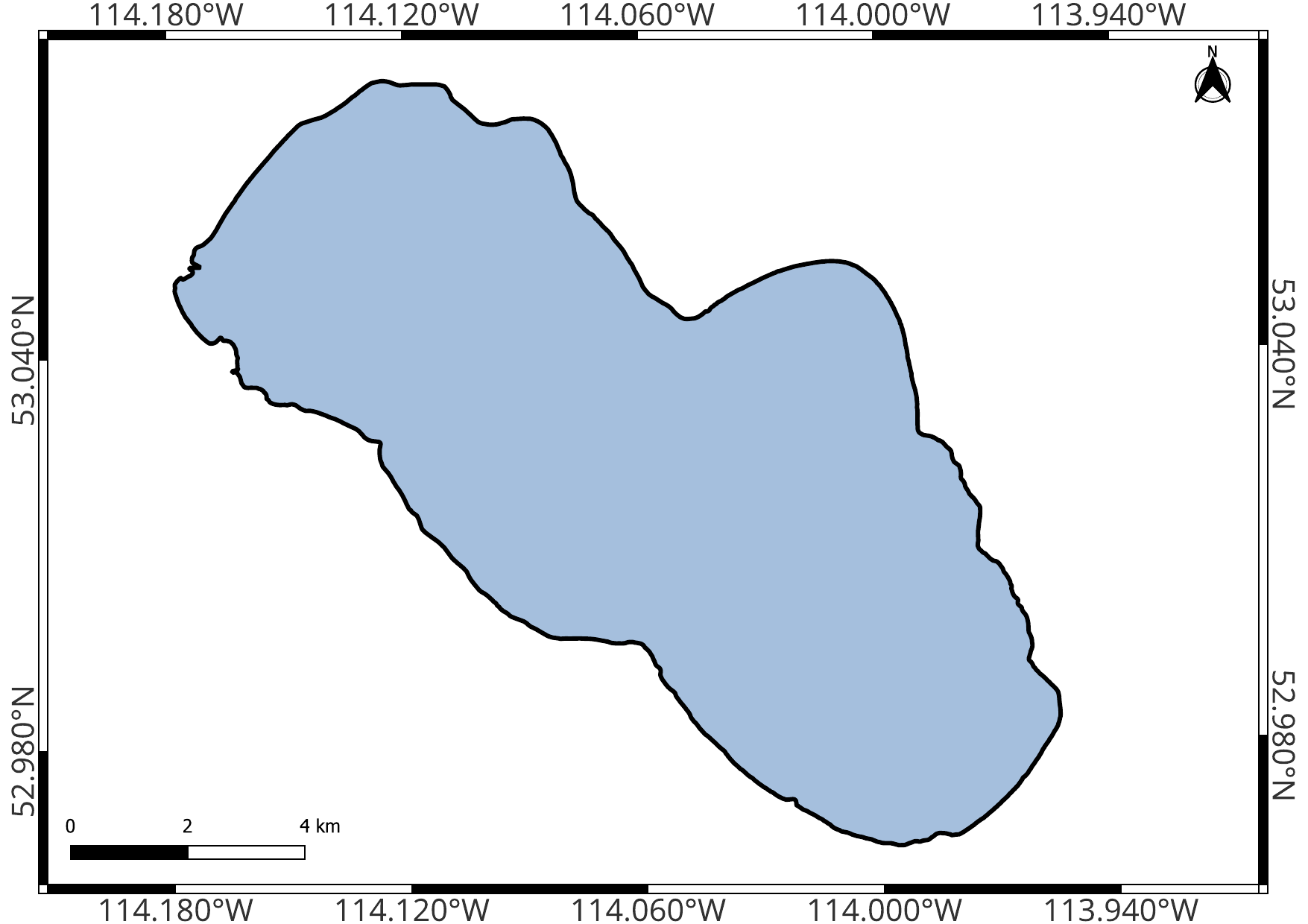}
    \caption{Closed curve representing the shape of Pigeon Lake, Alberta, Canada. Provided by the Government of Alberta, \cite{PigeonLake}}
    \label{fig: lake_shape}
\end{figure}

\begin{figure}[ht]
    \centering   \includegraphics[width=0.9\linewidth]{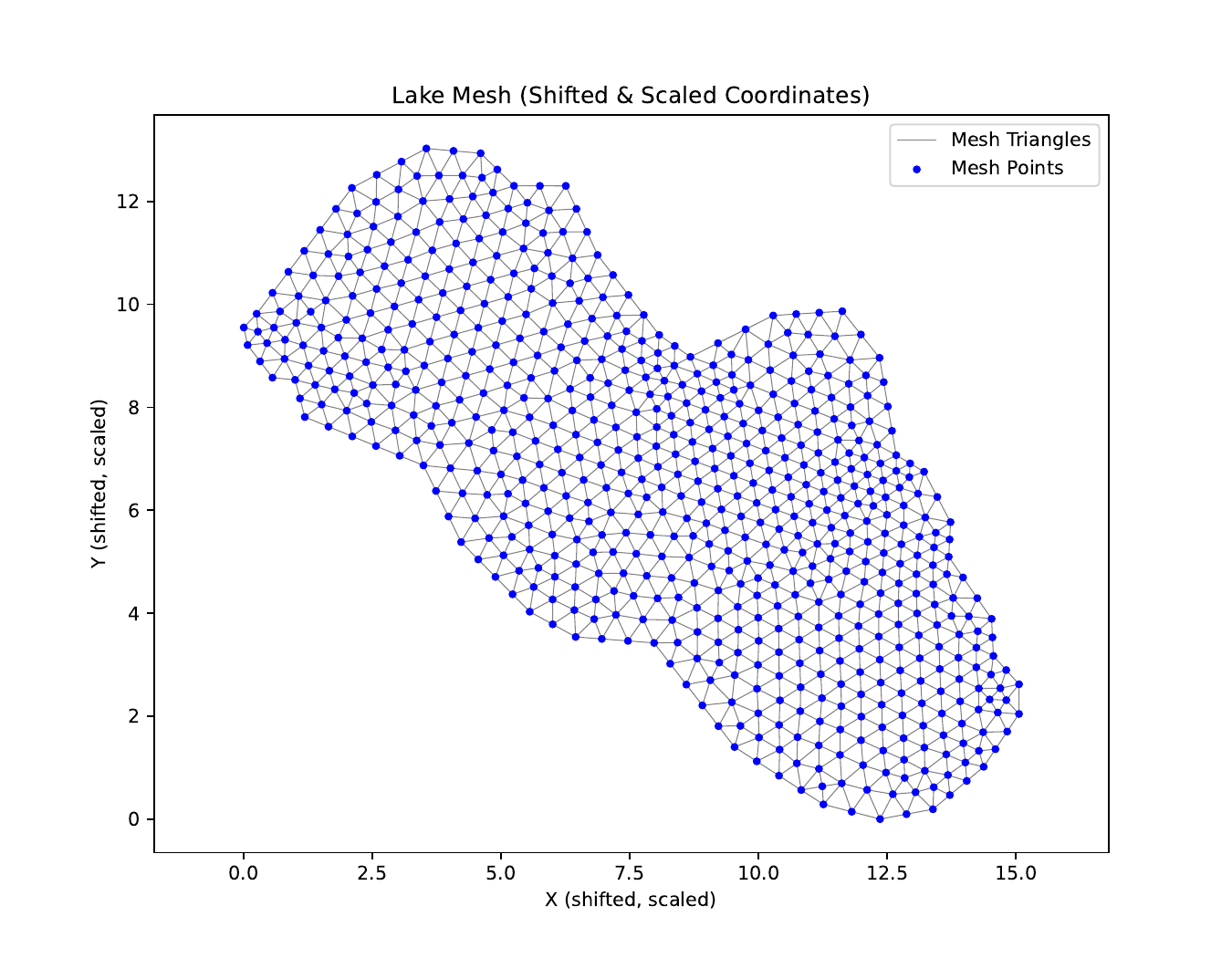}
    \caption{Mesh of Pigeon Lake, Alberta, Canada that we solved our PDE system on.}
    \label{fig: lake_shape_mesh}
\end{figure}

\clearpage

\printbibliography

@book{evans2010,
  author    = {Evans, Lawrence C.},
  title     = {Partial Differential Equations},
  edition   = {2},
  year      = {2010},
  publisher = {American Mathematical Society},
  address   = {Berkeley, CA},
  note      = {Department of Mathematics, University of California},
  doi       = {http://dx.doi.org/10.1090/gsm/019}
}

@article{zhang2022,
  title={Total Phosphorus and Nitrogen Dynamics and Influencing Factors in Dongting Lake Using Landsat Data},
  author={Zhang, Yuanyuan and Jin, Shuanggen and Wang, Ning and Zhao, Jiarui and Guo, Hongwei and Pellikka, Petri},
  journal={Remote Sensing},
  volume={14},
  number={22},
  pages={5648},
  year={2022},
  publisher={MDPI},
  doi = {https://doi.org/10.3390/rs14225648}
}

@article{qin2009,
author = {Qin, Qian and Clark, Jeff and Voller, Vaughan and Stefan, Heinz},
year = {2009},
month = {03},
pages = {},
title = {Depth-Dependent Dispersion Coefficient for Modeling of Vertical Solute Exchange in a Lake Bed under Surface Waves},
volume = {135},
journal = {Journal of Hydraulic Engineering},
doi = {https://doi.org/10.1061/(ASCE)0733-9429(2009)135:3(187)}
}

@article{gao2020,
  title={Remote sensing algorithms for estimation of fractional vegetation cover using pure vegetation index values: A review},
  author={Gao, Lin and Wang, Xiaofei and Johnson, Brian Alan and Tian, Qingjiu and Wang, Yu and Verrelst, Jochem and Mu, Xihan and Gu, Xingfa},
  journal={ISPRS Journal of Photogrammetry and Remote Sensing},
  volume={159},
  pages={364--377},
  year={2020},
  publisher={Elsevier},
  doi = {https://doi.org/10.1016/j.isprsjprs.2019.11.018.}
}

@article{du2022,
  title={Remote Estimation of the Particulate Phosphorus Concentrations in Inland Water Bodies: A Case Study in Hongze Lake},
  author={Du, Chenggong and Shi, Kun and Liu, Naisen and Li, Yunmei and Lyu, Heng and Yan, Chen and Pan, Jinheng},
  journal={Remote Sensing},
  volume={14},
  number={16},
  pages={3863},
  year={2022},
  doi = {https://doi.org/10.3390/rs14163863},
  publisher={MDPI}
}

@article{heggerud2020,
author = {Heggerud, Christopher M. and Wang, Hao and Lewis, Mark A.},
title = {Transient Dynamics of a Stoichiometric Cyanobacteria Model via Multiple-Scale Analysis},
journal = {SIAM Journal on Applied Mathematics},
volume = {80},
number = {3},
pages = {1223-1246},
year = {2020},
doi = {https://doi.org/10.1137/19M1251217},
}

@book{Cantrell2004,
  author    = {Robert Stephen Cantrell and Chris Cosner},
  title     = {Spatial Ecology via Reaction-Diffusion Equations},
  year      = {2004},
  isbn      = {9780471493013},
  publisher = {John Wiley \& Sons, Ltd},
  doi = {https://doi.org/10.1002/0470871296},
  url       = {https://doi.org/10.1002/0470871296}
}

@article{huisman1994,
  title={Light limited growth and competition for light in well mixed aquatic environments: An elementary model},
  author={Huisman, J. and Weissing, F.J.},
  journal={Ecology},
  volume={75},
  number={2},
  pages={507--520},
  year={1994},
  publisher={JSTOR},
  doi={https://doi.org/10.2307/1939554}
}

@article{zhao2014,
title = {Models for identifying significant environmental factors associated with cyanobacterial bloom occurrence and for predicting cyanobacterial blooms},
journal = {Journal of Great Lakes Research},
volume = {40},
number = {2},
pages = {265-273},
year = {2014},
issn = {0380-1330},
doi = {https://doi.org/10.1016/j.jglr.2014.02.011},
url = {https://www.sciencedirect.com/science/article/pii/S038013301400032X},
author = {Laijun Zhao and Wei Huang},
keywords = {Cyanobacterial bloom, Environmental factor, Probit model, Prediction},
}

@article{cao2006,
  title={Effects of Wind and Wind-Induced Waves on Vertical Phytoplankton Distribution and Surface Blooms of Microcystis aeruginosa in Lake Taihu},
  author={Cao, H.S. and Kong, F.X. and Luo, L.C. and Shi, X.L. and Yang, Z. and Zhang, X.F. and Tao, Y.},
  journal={Journal of Freshwater Ecology},
  volume={21},
  number={2},
  pages={231--238},
  year={2006},
  publisher={Taylor \& Francis},
  doi={https://doi.org/10.1080/02705060.2006.9664991}
}

@article{amann1990,
author = {Amann, Herbert and Crandall, M.G.},
year = {1990},
month = {01},
pages = {13-75},
title = {Dynamic theory of quasilinear parabolic equations. II. Reaction-diffusion systems},
volume = {3},
journal = {Differential and Integral Equations},
doi = {https://doi.org/10.57262/die/1371586185}
}

@article{huisman2018cyanobacterial,
  title = {Cyanobacterial blooms},
  author = {Huisman, Jef and Codd, Geoffrey A. and Paerl, Hans W. and others},
  journal = {Nature Reviews Microbiology},
  volume = {16},
  pages = {471--483},
  year = {2018},
  doi = {https://doi.org/10.1038/s41579-018-0040-1}
}

@article{paerl2008blooms,
  title = {Blooms Like It Hot},
  author = {Paerl, Hans W. and Huisman, Jef},
  journal = {Science},
  volume = {320},
  pages = {57--58},
  year = {2008},
  doi = {https://doi.org/10.1126/science.1155398}
}

@article{paerl2009climate,
  title = {Climate change: a catalyst for global expansion of harmful cyanobacterial blooms},
  author = {Paerl, Hans W. and Huisman, Jef},
  journal = {Environmental Microbiology Reports},
  year = {2009},
  volume = {1},
  number = {1},
  pages = {27--37},
  doi = {https://doi.org/10.1111/j.1758-2229.2008.00004.x},
  pmid = {23765717}
}

@article{wilhelm2011,
title = {The relationships between nutrients, cyanobacterial toxins and the microbial community in Taihu (Lake Tai), China},
journal = {Harmful Algae},
volume = {10},
number = {2},
pages = {207-215},
year = {2011},
issn = {1568-9883},
doi = {https://doi.org/10.1016/j.hal.2010.10.001},
url = {https://www.sciencedirect.com/science/article/pii/S1568988310001150},
author = {Steven W. Wilhelm and Sarah E. Farnsley and Gary R. LeCleir and Alice C. Layton and Michael F. Satchwell and Jennifer M. DeBruyn and Gregory L. Boyer and Guangwei Zhu and Hans W. Paerl},
keywords = {, DNA sequencing, Cyanobacterial harmful algal blooms, Cyanotoxins, Fecal bacteria}
}

@article{kotak1996microcystin,
author = {Kotak, Brian and Zurawell, Ron and Prepas, Ellie and Holmes, Charles},
year = {1996},
month = {01},
pages = {1974-1985},
title = {Microcystin-LR concentration in aquatic food web compartments from lakes of varying trophic status},
volume = {53},
journal = {Canadian Journal of Fisheries and Aquatic Sciences},
doi = {https://doi.org/10.1139/cjfas-53-9-1974}
}

@article{sobol2001global,
  title     = {Global sensitivity indices for nonlinear mathematical models and their Monte Carlo estimates},
  author    = {Sobol, I. M.},
  journal   = {Mathematics and Computers in Simulation},
  volume    = {55},
  number    = {1--3},
  pages     = {271--280},
  year      = {2001},
  publisher = {Elsevier},
  doi = {https://doi.org/10.1016/S0378-4754(00)00270-6}
}

@misc{herman2017salib,
  title     = {{SALib}: an open-source {Python} library for sensitivity analysis},
  author    = {Herman, John and Usher, Will},
  howpublished = {\url{https://salib.github.io/SALib/}},
  note      = {Version 1.3.0},
  year      = {2017}
}

@article{Hardy2015,
  author = {Hardy, F. J. and Johnson, A. and Hamel, K. and others},
  title = {Cyanotoxin Bioaccumulation in Freshwater Fish, Washington State, USA},
  journal = {Environmental Monitoring and Assessment},
  year = {2015},
  volume = {187},
  number = {11},
  pages = {667},
  doi = {https://doi.org/10.1007/s10661-015-4875-x}
}

@article{wang2007,
author = {Wang, Hao and Smith, Hal L. and Kuang, Yang and Elser, James J.},
title = {Dynamics of Stoichiometric Bacteria-Algae Interactions in the Epilimnion},
journal = {SIAM Journal on Applied Mathematics},
volume = {68},
number = {2},
pages = {503-522},
year = {2007},
doi = {https://doi.org/10.1137/060665919},

URL = { 
    
        https://doi.org/10.1137/060665919
},
eprint = { 
    
        https://doi.org/10.1137/060665919
}
,
    abstract = { Bacteria-algae interaction in the epilimnion is modeled with the explicit consideration of carbon (energy) and phosphorus (nutrient). Global qualitative analysis and bifurcation diagrams of this model are presented. We hypothesize that there are three dynamical scenarios determined by the basic reproductive numbers of bacteria and algae. Effects of key environmental conditions are examined through these scenarios and from systematic and extensive simulations. It is also shown that excessive sunlight will destroy bacterial communities. Bifurcation diagrams for the depth of epilimnion mimic the profile of Lake Biwa, Japan. Competition of bacterial strains are modeled to examine Nishimura's hypothesis that in severely P-limited environments such as Lake Biwa, P-limitation exerts more severe constraints on the growth of bacterial groups with higher nucleic acid contents, which allows low nucleic acid bacteria to be competitive. }
}

@article{wang2025existence,
  title={Existence and asymptotic stability of a generic Lotka-Volterra system with nonlinear spatially heterogeneous cross-diffusion},
  author = {Wang, Tianxu and Sim, Jiwoon and Wang, Hao},
  year = {2025},
  month = {01},
  pages = {},
  journal={arXiv preprint arXiv:2501.12569},
  doi = {https://doi.org/10.48550/arXiv.2501.12569}
}

@article{paerl2013harmful,
  title = {Harmful cyanobacterial blooms: causes, consequences, and controls},
  author = {Paerl, H. W. and Otten, T. G.},
  journal = {Microbial Ecology},
  volume = {65},
  number = {4},
  pages = {995--1010},
  year = {2013},
  publisher = {Springer},
  doi = {https://doi.org/10.1007/s00248-012-0159-y}
}

@book{reynolds2006,
  title = {The Ecology of Phytoplankton},
  author = {Reynolds, C. S.},
  year = {2006},
  publisher = {Cambridge University Press}
}

@article{wang2022,
title = {Mathematical comparison and empirical review of the Monod and Droop forms for resource-based population dynamics},
journal = {Ecological Modelling},
volume = {466},
pages = {109887},
year = {2022},
issn = {0304-3800},
doi = {https://doi.org/10.1016/j.ecolmodel.2022.109887},
url = {https://www.sciencedirect.com/science/article/pii/S030438002200014X},
author = {Hao Wang and Pablo Venegas Garcia and Shohel Ahmed and Christopher M. Heggerud},
keywords = {Droop form, Monod form, Transient dynamics, Asymptotic dynamics},
abstract = {Almost all biological models use either the Droop or Monod form to describe the resource-based growth of a living organism. Empirical evidence overwhelmingly suggests the Droop form describes data more accurately than the Monod form, however, the Monod form is more popular due to its simplicity. Focusing on phytoplankton, we illustrate the underlying logics behind these two forms via conceptual comparison, experimental data validation, transient, and asymptotic dynamics. The conceptual illustration provides the primary difference in their mechanisms via a paradox. Data validation is tested via field and laboratory experiments. The Droop and Monod forms have consistent asymptotic dynamics in the closed nutrient case, whereas the transient dynamics are significantly different when the nutrient uptake rate is small. In addition, we decipher Michael Droop’s private last statements on unifying the Droop and Monod forms as well as simplifying the Droop form. This article aims to guide future model development with any resource-based growth.}
}

@manual{FiredrakeUserManual,
  title        = {Firedrake User Manual},
  author       = {David A. Ham and Paul H. J. Kelly and Lawrence Mitchell and Colin J. Cotter and Robert C. Kirby and Koki Sagiyama and Nacime Bouziani and Sophia Vorderwuelbecke and Thomas J. Gregory and Jack Betteridge and Daniel R. Shapero and Reuben W. Nixon-Hill and Connor J. Ward and Patrick E. Farrell and Pablo D. Brubeck and India Marsden and Thomas H. Gibson and Miklós Homolya and Tianjiao Sun and Andrew T. T. McRae and Fabio Luporini and Alastair Gregory and Michael Lange and Simon W. Funke and Florian Rathgeber and Gheorghe-Teodor Bercea and Graham R. Markall},
  organization = {Imperial College London and University of Oxford and Baylor University and University of Washington},
  edition      = {First edition},
  year         = {2023},
  month        = {5},
  doi          = {https://doi.org/10.25561/104839},
}

@misc{PigeonLake,
  author = {Alberta Government},
  title = {Pigeon Lake, Alberta - Boundary (GIS data, polygon features)},
  year = {2016},
  publisher = {Alberta Energy Regulator},
  howpublished = {\url{https://open.alberta.ca/dataset/gda-dig_2008_0828}},
  note = {Accessed: 2025-02-11}
}

@article{geuzaine2009gmsh,
  title={Gmsh: A 3-D finite element mesh generator with built-in pre-and post-processing facilities},
  author={Geuzaine, Christophe and Remacle, Jean-Fran{\c{c}}ois},
  journal={International journal for numerical methods in engineering},
  volume={79},
  number={11},
  pages={1309--1331},
  year={2009},
  publisher={Wiley Online Library},
  doi = {https://doi.org/10.1002/nme.2579}
}

@article{hsu2010,
title = {On a system of reaction–diffusion equations arising from competition with internal storage in an unstirred chemostat},
journal = {Journal of Differential Equations},
volume = {248},
number = {10},
pages = {2470-2496},
year = {2010},
issn = {0022-0396},
doi = {https://doi.org/10.1016/j.jde.2009.12.014},
url = {https://www.sciencedirect.com/science/article/pii/S0022039609004677},
author = {Sze-Bi Hsu and Jifa Jiang and Feng-Bin Wang},
keywords = {Droop's model, Internal storage, Variable yield, Competition of algaes, Unstirred chemostat, Monotone dynamical system, Maximum principle, Upper solutions, Lower solutions, Global stability},
abstract = {In this paper we study a system of reaction–diffusion equations arising from competition of two microbial populations for a single-limited nutrient with internal storage in an unstirred chemostat. The conservation principle is used to reduce the dimension of the system by eliminating the equation for the nutrient. The reduced system (limiting system) generates a strongly monotone dynamical system in its feasible domain under a partial order. We construct suitable upper, lower solutions to establish the existence of positive steady-state solutions. Given the parameters of the reduced system, we answer the basic questions as to which species survives and which does not in the spatial environment and determine the global behaviors. The primary conclusion is that the survival of species depends on species's intrinsic biological characteristics, the external environment forces and the principal eigenvalues of some scalar partial differential equations. We also lift the dynamics of the limiting system to the full system.}
}

@article{hsu2014,
author = {Sze-Bi Hsu and Junping Shi and Feng-Bin Wang},
title = {Further studies of a reaction-diffusion system for an unstirred chemostat with internal storage},
journal = {Discrete and Continuous Dynamical Systems - B},
volume = {19},
number = {10},
pages = {3169-3189},
year = {2014},
issn = {1531-3492},
doi = {https://doi.org/10.3934/dcdsb.2014.19.3169},
url = {https://www.aimsciences.org/article/id/03ac9b63-170f-4867-8116-22cdbed1c74f},
author = {Sze-Bi Hsu and Junping Shi and Feng-Bin Wang},
keywords = {Unstirred chemostat, reaction-diffusion system, internal storage, steady state}
}

@article{Hsu2017,
  author    = {Sze-Bi Hsu, Shui Bing and Lam, King-Yeung and Wang, Feng-Bin},
  title     = {Single species growth consuming inorganic carbon with internal storage in a poorly mixed habitat},
  journal   = {Journal of Mathematical Biology},
  year      = {2017},
  volume    = {75},
  number    = {6-7},
  pages     = {1775--1825},
  month     = dec,
  doi       = {https://doi.org/10.1007/s00285-017-1134-5},
  pmid      = {28497245},
  note      = {Epub 2017 May 11}
}

@article{zhang2021wind,
  author = {Zhang, Y. and Loiselle, S. and Shi, K. and Han, T. and Zhang, M. and Hu, M. and Jing, Y. and Lai, L. and Zhan, P.},
  title = {Wind Effects for Floating Algae Dynamics in Eutrophic Lakes},
  journal = {Remote Sensing},
  volume = {13},
  number = {4},
  pages = {800},
  year = {2021},
  doi = {https://doi.org/10.3390/rs13040800}
}
\end{document}